\flushbottom \setlength{\parindent}{16pt}
\title{Solution to an isotopism question concerning rank 2 semifields}
\author{Michel Lavrauw, Giuseppe Marino, Olga Polverino and Rocco Trombetti}
\date{}
\begin{document}
\maketitle

\newtheorem{theorem}{Theorem}[section]
\newtheorem{lemma}[theorem]{Lemma}
\newtheorem{conj}[theorem]{Conjecture}
\newtheorem{remark}[theorem]{Remark}
\newtheorem{cor}[theorem]{Corollary}
\newtheorem{prop}[theorem]{Proposition}
\newtheorem{defin}[theorem]{Definition}
\newtheorem{result}[theorem]{Result}
\newtheorem{property}[theorem]{Property}

\makeatother
\newcommand{\Prf}{\noindent{\bf Proof}.\quad }
\renewcommand{\labelenumi}{(\alph{enumi})}


\def\B{\mathbf B}
\def\C{\mathbf C}
\def\Z{\mathbf Z}
\def\Q{\mathbf Q}
\def\W{\mathbf W}
\def\a{\mathbf a}
\def\b{\mathbf b}
\def\c{\mathbf c}
\def\d{\mathbf d}
\def\e{\mathbf e}
\def\l{\mathbf l}
\def\v{\mathbf v}
\def\w{\mathbf w}
\def\x{\mathbf x}
\def\y{\mathbf y}
\def\z{\mathbf z}
\def\t{\mathbf t}
\def\cD{\mathcal D}
\def\cC{\mathcal C}
\def\cH{\mathcal H}
\def\cM{{\mathcal M}}
\def\cK{\mathcal K}
\def\cQ{\mathcal Q}
\def\cU{\mathcal U}
\def\cS{\mathcal S}
\def\cT{\mathcal T}
\def\cR{\mathcal R}
\def\cN{\mathcal N}
\def\cA{\mathcal A}
\def\cF{\mathcal F}
\def\cL{\mathcal L}
\def\cP{\mathcal P}
\def\cG{\mathcal G}
\def\cGD{\mathcal GD}

\def\PG{{\rm PG}}
\def\GF{{\rm GF}}

\def\Pg{PG(5,q)}
\def\pg{PG(3,q^2)}
\def\ppg{PG(3,q)}
\def\HH{{\cal H}(2,q^2)}
\def\F{\mathbb F}
\def\Ft{\mathbb F_{q^t}}
\def\P{\mathbb P}
\def\V{\mathbb V}
\def\bS{\mathbb S}
\def\G{\mathbb G}
\def\E{\mathbb E}
\def\N{\mathbb N}
\def\K{\mathbb K}
\def\D{\mathbb D}
\def\ps@headings{
 \def\@oddhead{\footnotesize\rm\hfill\runningheadodd\hfill\thepage}
 \def\@evenhead{\footnotesize\rm\thepage\hfill\runningheadeven\hfill}
 \def\@oddfoot{}
 \def\@evenfoot{\@oddfoot}
}

\begin{abstract}
In \cite{De} Dempwolff gives a construction of three classes of rank
two semifields of order $q^{2n}$, with $q$ and $n$ odd, using
Dembowski-Ostrom polynomials. The question whether these semifields
are new, i.e. not isotopic to previous constructions, is left as an
open problem. In this paper we solve this problem for $n>3$, in
particular we prove that two of these classes, labeled $\cD_{A}$ and
$\cD_{AB}$, are new for $n>3$, whereas presemifields in family
${\cD}_{B}$ are isotopic to Generalized Twisted Fields for each
$n\geq 3$.
\end{abstract}

\bigskip

\par\noindent

\section{Introduction}
In \cite{De}, Ulrich Dempwolff introduced three families of rank two
presemifields of order $q^{2n}$, with $q$ and $n\geq 3$ odd,
exploiting pairs of Dembowski--Ostrom polynomials with assigned
properties. We will refer to these presemifields as ${\cD}_{A},$
${\cD}_{B}$ and ${\cD}_{AB}$, where the notation indicates the use
of the functions $A$ and $B$ in the construction from \cite{De} (see
Section \ref{sec:demp}). In the same article, the author computed
their nuclei showing that presemifields ${\cD}_{AB}$ are not
isotopic to Generalized Twisted Fields (\cite[Prop. 4.4]{De}).
However, the information about the size of the nuclei turned out to
be insufficient to address the isotopism question for these
(pre)semifields and all those previously known, which, in fact, is
left open. In this regard, in Remark $(c)$ of \cite{De}, the author
states that in view of the rapidly growing number of semifields in
recent years, the isotopism problem for the relevant (pre)semifields
appears to be a quite unpleasant task, hoping that somebody else
could take care of it. In this paper we solve this problem for
semifields in the families ${\cD}_{A}$ and ${\cD}_{AB}$ for $n>3$,
and for semifields in the family ${\cD}_B$ for $n\geq 3$. Precisely,
we prove the following result.

\begin{theorem}\label{thm:main}
The presemifields in families ${\cD}_B$, for all $n\geq 3$, are
isotopic to Generalized Twisted Fields. The presemifields in
families ${\cD}_{A}$ and ${\cD}_{AB}$ are new for all $n>3$, i.e.
they are not isotopic to any previously known semifield. Moreover,
they are not isotopic to any semifield derived from a known
presemifield using the Knuth orbit and translation dual operation.
\end{theorem}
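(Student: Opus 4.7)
The plan is to distinguish the three families via invariants of the isotopism class that refine the nuclei, since Dempwolff's nucleus computation already established that the nuclei alone are insufficient. For rank two presemifields of order $q^{2n}$ one has a geometric description through spread sets of $\F_q$-linear maps on $\F_{q^n}^2$, and to each such presemifield one can attach $\F_q$-linear sets in $\PG(3,q^n)$. Rank, dimension and intersection data of these linear sets are isotopy invariants which, moreover, behave well under both the Knuth orbit and the translation dual.

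For the family $\cD_B$, the plan is to exhibit an explicit isotopism to a Generalized Twisted Field. Starting from the multiplication in $\cD_B$ given in \cite{De} through the Dembowski--Ostrom polynomial $B$, I would perform a change of coordinates on $\F_{q^n}^2$ that rewrites the product in the standard twisted form $x\ast y=xy-\beta\,x^{\sigma_1}y^{\sigma_2}$; the isotopism is then presented as an explicit triple $(F,G,H)$. The main point is the correct identification of $\F_{q^n}^2$ with $\F_{q^{2n}}$, after which the argument is a direct verification that reduces $B$ to a Dembowski--Ostrom monomial of the required shape.

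For the families $\cD_A$ and $\cD_{AB}$ the plan has two parts. First, I would assemble the list of all previously known rank two (pre)semifields of order $q^{2n}$ with $q$ and $n$ odd and $n>3$, and close this list under the Knuth orbit (of size at most six) and the translation dual. Both operations have been described explicitly in the rank two setting, so the resulting closure is finite and manageable. Second, I would compute, for $\cD_A$ and $\cD_{AB}$, the invariant of the associated $\F_q$-linear set in $\PG(3,q^n)$ directly from the polynomial descriptions of $A$ and $B$ in \cite{De}, and verify that it does not match the invariant produced by any element of the closed list.

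The main difficulty is this second step: each family in the closed list carries its own polynomial description and its own invariant, and the comparison becomes a case-by-case analysis. The assumption $n>3$ is crucial on both sides. On the $\cD$-side it ensures that certain linearised polynomials attached to $A$ and $B$ have the expected rank; on the side of the known families it excludes sporadic low-dimensional coincidences that occur only for $n=3$ and would otherwise prevent the invariant from separating the families. Once this case analysis is carried out, the combination of the explicit isotopism for $\cD_B$ and the invariant comparison for $\cD_A,\cD_{AB}$ yields both statements of the theorem.
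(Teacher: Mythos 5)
Your overall architecture --- attach to each presemifield an $\F_q$-linear set in $\PG(3,q^n)$, use its geometry together with the nuclei to separate $\cD_A$ and $\cD_{AB}$ from the known families, and close the known list under transposition and translation dual --- is the same framework the paper uses. Your route for $\cD_B$ is nominally different (an explicit isotopism triple rather than geometry), but even there the paper's actual mechanism is the change of variables you gesture at: precomposing with $H_{b,r}$ rewrites $L_{\cD_B}$ as $\{\langle(x-bx^{q^r},y-by^{q^r},y+by^{q^r},\xi(x+bx^{q^r}))\rangle\}$, which is then shown to be of pseudoregulus type with transversal lines external to $\cQ$ and mutually polar, and the known characterization of the linear sets of Generalized Twisted Fields finishes the job without ever producing the triple $(F,G,H)$.

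The genuine gap is that you never identify what the separating invariant actually is, and that is where essentially all of the work lies. Dempwolff's nuclei already eliminate most known families; what survives for $\cD_A$ is exactly the Generalized Twisted Fields, and for $\cD_{AB}$ the Generalized Dickson and ${\mathcal {EMPT}1}$ semifields, and ``rank, dimension and intersection data'' cannot finish, because in the scattered case all competing linear sets are maximum scattered of rank $2n$ in $\PG(3,q^n)$ with two long lines. The invariant needed is finer: whether a long line meets the linear set in a sublinear set \emph{of pseudoregulus type}. Making this work requires a classification of maximum scattered linear sets of $\PG(3,q^n)$ with two lines of pseudoregulus type (Theorem \ref{thm:ScatGenDi}) and a proof that $L_A=\{\langle(x,A_{a,r}(x))\rangle:x\in\F_{q^n}^*\}$ is scattered if and only if $N_q(a)\neq -1$ and, for $n>3$, is \emph{not} of pseudoregulus type (Lemma \ref{lemma:LineCASEA}); the latter reduces to showing that certain congruences modulo $n$ forced by a polynomial identity are solvable only when $n=3$. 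That, and not ``the rank of certain linearised polynomials,'' is the true role of the hypothesis $n>3$: for $n=3$ the set $L_{\cD_A}$ really is of pseudoregulus type and the method fails, which is why the paper leaves $n=3$ open. In the non-scattered cases ($N_q(a)=-1$, $N_q(b^2)=-1$) one further needs the exact weight distribution of the points (all of weight $2$, counted precisely) to exclude ${\mathcal {EMPT}1}$ and the non-scattered Dickson semifields; and closure under the translation dual is not automatic but rests on computing the adjoint maps $\hat A_{a,r}$ and $\hat B_{b,r}$ and checking that each family $\cD_A$, $\cD_B$, $\cD_{AB}$ is preserved by $\perp$. None of these steps appears in your proposal, so as it stands it is a plausible plan rather than a proof.
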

\section{Linear Sets}

A pointset  $L$ of a projective space
$\Lambda=PG(r-1,q^n)=PG(V,\F_{q^n})$ ($q=p^h$, $p$ prime)  is said
to be an {\em $\F_q$--linear} set of $\Lambda$ if it is defined by
the non--zero vectors of an $\F_q$--vector subspace $U$ of $V$,
i.e., $$ L=L_U=\{\langle {\bf u}\rangle_{\F_{q^n}}: {\bf u}\in
U\setminus\{{\bf 0}\}\}.\quad\quad (\footnote{In what follows, in
spite of simplicity, we will write $\langle {\bf u}\rangle$ to
denote the $\F_{q^n}$--vector space generated by $\bf u$.})$$

If $dim _{\F_q} U=k$, we say that $L$ has {\it rank} $k$. When $k=r$
and  $\langle U \rangle_{\F_{q^n}}=V$, then  the $\F_q$--linear set
$L_U$ is a {\it subgeometry} of $\Lambda$ isomorphic to $PG(r-1,q)$.

Let  $\Omega=PG(W,\F_{q^n})$ be a subspace of $\Lambda$ and let $L_U$
be an $\F_q$-linear set of $\Lambda$. Then $\Omega\cap L_U$ is an
$\F_q$--linear set of $\Omega$ defined by the $\F_q$--vector
subspace $U\cap W$ and, if $dim_{\F_q}(W\cap U)=i$, we say that
$\Omega$ has {\it weight $i$} in $L_U$. Denoting by $x_i$ the number of
points of $L_U$ of weight $i$, where $i\in\{1,\dots,n\}$, if $L_U$ is an $\F_q$-linear set of
$\Lambda$ of rank $k>0$, then
\begin{eqnarray}
&&\hskip -.8cm |L_U|=x_1+x_2+\dots+x_n,\label{form1}\\
&&\hskip -.8cm x_1+(q+1)x_2+\dots+(q^{n-1}+\dots
+q+1)x_n=q^{k-1}+q^{k-2} +\dots+q+1,\label{form2}\\
&&\hskip -.8cm |L_U|\leq  q^{k-1}+q^{k-2}+\dots+q+1,\label{form4}\nonumber\\
&&\hskip -.8cm |L_U |\equiv 1\, (mod \, q)\label{form3}\nonumber.
\end{eqnarray}

 \noindent
 For further details on linear sets see \cite{Polverino2010}, \cite{LaVa2010}, \cite{LV} and \cite{LuMaPoTr-Sub}.

In \cite{LP}, the authors give the following characterization of
$\F_q$--linear sets. Let $\Sigma=PG(k-1,q)$ be a subgeometry of
$\Sigma^*=PG(k-1,q^n)$, let $\Gamma$ be a $(k-r-1)$--dimensional
subspace  of $\Sigma^*$ disjoint from $\Sigma$ and let
$\Lambda=PG(r-1,q^n)$ be an $(r-1)$--dimensional subspace of
$\Sigma^*$ disjoint from $\Gamma$. Denote by $L=\{\langle \Gamma,P
\rangle \cap \Lambda  \,: \, P\in \Sigma \}$ the projection  of
$\Sigma$ from $\Gamma$ onto $\Lambda.$ We call $\Gamma$ and
$\Lambda$, respectively, the $center$ and the $axis$ of the
projection. The map $p_{\Gamma,\Lambda}:P\in\Sigma\mapsto \langle \Gamma,P\rangle \cap \Lambda$
is surjective and $L=p_{\Gamma,\Lambda}(\Sigma).$

\begin{theorem}{\rm\cite[Theorems 1 and 2]{LP}}\label{Theor LP1}
If $L$ is a projection of $\Sigma=PG(k-1,q)$ onto
$\Lambda=PG(r-1,q^n),$ then $L$ is an $\F_q$--linear set of
$\Lambda$ of rank $k$ and $\langle L \rangle=\Lambda$. Conversely,
if $L$ is an $\F_q$--linear set of $\Lambda$ of rank $k$ and
$\langle L \rangle=\Lambda$, then either $L$ is a subgeometry of
$\Lambda$ or for each $(k-r-1)$--dimensional subspace $\Gamma$ of
$\Sigma^*=PG(k-1,q^t)$ disjoint from $\Lambda$ there exists a
subgeometry $\Sigma$ of $\Sigma^*$ disjoint from $\Gamma$ such
that $L=p_{\Gamma,\Lambda}(\Sigma).$
\end{theorem}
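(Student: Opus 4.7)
The plan is to transfer everything to the underlying vector spaces. Fix $V^*$ of $\F_{q^n}$--dimension $k$ so that $\Sigma^*=PG(V^*,\F_{q^n})$, and let $V\subset V^*$ of $\F_{q^n}$--dimension $r$ with $\Lambda=PG(V,\F_{q^n})$. A subgeometry of $\Sigma^*$ of order $q$ is encoded by an $\F_q$--subspace $U^*\subset V^*$ with $\dim_{\F_q}U^*=k$ and $\langle U^*\rangle_{\F_{q^n}}=V^*$, while $\Gamma$ is encoded by an $\F_{q^n}$--subspace $W\subset V^*$ of $\F_{q^n}$--dimension $k-r$. The disjointness conditions $\Sigma\cap\Gamma=\emptyset$ and $\Lambda\cap\Gamma=\emptyset$ translate respectively to $U^*\cap W=\{{\bf 0}\}$ and $V^*=W\oplus V$, and the projection $p_{\Gamma,\Lambda}$ is realised by the $\F_{q^n}$--linear projection $\pi\colon V^*\to V$ along $W$.

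The forward direction is then immediate. Assuming $L=p_{\Gamma,\Lambda}(\Sigma)$ with $\Sigma=L_{U^*}$, the condition $U^*\cap W=\{{\bf 0}\}$ makes $\pi\vert_{U^*}$ injective, so $\pi(U^*)$ is an $\F_q$--subspace of $V$ of $\F_q$--dimension $k$ and $L=L_{\pi(U^*)}$. Hence $L$ is an $\F_q$--linear set of rank $k$, and $\langle L\rangle=\pi(\langle U^*\rangle_{\F_{q^n}})=\pi(V^*)=V$.

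For the converse, write $L=L_U$ with $U\subset V$ of $\F_q$--dimension $k$ and $\langle U\rangle_{\F_{q^n}}=V$. If $k=r$ then $L$ is already a subgeometry of $\Lambda$, so I assume $k>r$, fix any admissible $\Gamma$, and hence $W$ with $V^*=W\oplus V$. My proposal is to look for the desired subgeometry in graph form: set $U^*=\{{\bf u}+\phi({\bf u})\colon{\bf u}\in U\}$ for some $\F_q$--linear map $\phi\colon U\to W$ to be chosen. Such $U^*$ automatically has $\F_q$--dimension $k$, satisfies $U^*\cap W=\{{\bf 0}\}$, and projects onto $U$, so that $p_{\Gamma,\Lambda}(L_{U^*})=L_U=L$ as soon as $L_{U^*}$ is actually a subgeometry.

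The only remaining requirement $\langle U^*\rangle_{\F_{q^n}}=V^*$ is the main obstacle, and it reduces to an elementary ``syzygy'' count. Pick an $\F_q$--basis ${\bf u}_1,\dots,{\bf u}_k$ of $U$ and let
\[
K=\{(c_1,\dots,c_k)\in\F_{q^n}^k\,:\,c_1{\bf u}_1+\cdots+c_k{\bf u}_k={\bf 0}\};
\]
since the ${\bf u}_i$ span $V$ over $\F_{q^n}$, $\dim_{\F_{q^n}}K=k-r$. Writing ${\bf w}_i=\phi({\bf u}_i)$, the decomposition $V^*=V\oplus W$ yields that ${\bf u}_1+{\bf w}_1,\dots,{\bf u}_k+{\bf w}_k$ are $\F_{q^n}$--linearly independent if and only if the $\F_{q^n}$--linear map $\Psi\colon\F_{q^n}^k\to W$, $(c_i)\mapsto\sum c_i{\bf w}_i$, is injective on $K$. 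Since $\dim_{\F_{q^n}}K=\dim_{\F_{q^n}}W=k-r$, this is equivalent to $\Psi\vert_K$ being an isomorphism. The construction is then finished by picking any $\F_{q^n}$--linear isomorphism $K\to W$, extending it to a map $\F_{q^n}^k\to W$, and reading off the images of the standard basis as the admissible ${\bf w}_i$; this provides the required $\phi$ for each prescribed $\Gamma$.
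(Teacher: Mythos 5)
Your argument is correct. Note, however, that the paper does not prove this statement at all: it is quoted verbatim from \cite[Theorems 1 and 2]{LP}, so there is no in-paper proof to compare against. Your vector-space translation (forward direction via injectivity of the projection $\pi$ restricted to $U^*$; converse via lifting $U$ to a graph $U^*=\{{\bf u}+\phi({\bf u})\}$ and choosing $\phi$ so that the syzygy space $K$ maps isomorphically onto $W$, forcing $\langle U^*\rangle_{\F_{q^n}}=V^*$) is sound in every step and is essentially the standard field-reduction argument used in the cited source; the only remark worth adding is that your dichotomy $k=r$ versus $k>r$ correctly accounts for the ``either $L$ is a subgeometry'' alternative in the statement.
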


Following \cite{BL}, \cite{ML}, we call an $\F_q$--linear set $L_U$ of $\Lambda$ of rank $k$ {\em
scattered} if all of its points have weight 1, or equivalently, if
$L_U$ has maximum size $q^{k-1}+q^{k-2}+\cdots+q+1$.  In
\cite[Theorem 4.3]{BL}, the authors prove that a scattered
$\F_q$--linear set of $PG(r-1,q^n)$ has rank at most $rn/2$. A
scattered $\F_q$--linear set $L$ of $PG(r-1,q^n)$ of maximum rank
$rn/2$ is called a {\em maximum scattered} linear set. Note that, as
a consequence of the above mentioned result, if $L$ is a scattered
$\F_q$-linear set of $PG(r-1,q^n)$, then each line  of
$PG(r-1,q^n)$ has weight at most $n$ in $L$.

\begin{defin}{\rm
If $L$ is a scattered $\F_q$-linear set of $PG(r-1,q^n)$, a line
of weight $n$ in $L$ is said to be a {\it long line with respect to} $L$.}
\end{defin}

In \cite{LuMaPoTr-Sub}, generalizing results contained in
\cite{MPT}, \cite{LMPT} and  \cite{LV}, a family of maximum
scattered linear sets, called of {\it pseudoregulus type}, is
introduced. Precisely, a scattered $\F_q$--linear set $L=L_U$ of
$\Lambda=PG(2h-1,q^n)$ of rank $hn$ ($h,n \geq 2$) is of {\it pseoudoregulus type} if\\
(i) there exist $m=\frac{q^{nh}-1}{q^n-1}$ pairwise disjoint long
lines of $L$, say $s_1,s_2,\dots,s_m$;\\ (ii) there exist exactly
two $(h-1)$--dimensional subspaces $T_1$ and $T_2$ of $\Lambda$
disjoint from $L$ such that $T_j\cap s_i\neq \emptyset$ for each
$i=1,\dots,m$ and for each $j=1,2$.

The set of lines $\mathcal{P}_L = \{s_i \colon i=1,\dots,m\}$ is called
the {\it $\F_q$--pseudoregulus} (or simply {\it pseudoregulus}) of
$\Lambda$ associated with $L$ and  $T_1$ and $T_2$ are the  {\it
transversal spaces} of $\mathcal{P}_L$ (or {\it transversal spaces} of
$L$).

Here, we are interested in $\F_q$--linear sets of pseudoregulus
type of a 3--dimensional projective space $\Lambda=PG(3,q^n)$. In
such a case the associated pseudoregulus ${\cal P}_L$ consists of
$q^n+1$ long lines and its transversal spaces are two disjoint
lines of $\Lambda$.

By \cite[Sec. 2]{LMPT} and \cite[Thms 3.5 and 3.9]{LuMaPoTr-Sub},
$\F_q$--linear sets of pseudoregulus type of $\Lambda=PG(3,q^n)$
can be characterized in the following way.

\begin{theorem}[]\label{thm:algebraicpseudoregulus}
Let $T_1=PG(U_1,\F_{q^n})$ and $T_2=PG(U_2,\F_{q^n})$ be two
disjoint lines of $\Lambda=PG(V,\F_{q^n})=PG(3,q^n)$ and let
$\Phi_f$ be a strictly semilinear collineation between $T_1$ and
$T_2$ having as a companion automorphism an element $\sigma\in
Aut(\F_{q^n})$ such that $Fix(\sigma)=\F_q$. Then, for each $\rho
\in \F_{q^n}^*$, the set
$$L_{\rho,f} = \{\langle \underbar{u}+\rho f(\underbar{u}) \rangle \,:\, \underbar{u}\in U_1\setminus\{\underbar 0\}\}$$
is an $\F_q$-linear set of $\Lambda$ of pseudoregulus type whose
associated pseudoregulus is $\mathcal{P}_{L_{\rho,f}}=\{\langle P,
P^{\Phi_f}\rangle\,:\, P\in T_1\}$, with transversal lines $T_1$
and $T_2$.

Conversely, each $\F_q$--linear set of pseudoregulus type of
$\Lambda=PG(3,q^n)$ can be obtained as described above.
\end{theorem}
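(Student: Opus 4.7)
The plan is to handle the two implications separately.

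\emph{Direct direction.} I begin by checking that $W_f:=\{u+\rho f(u):u\in U_1\}$ is an $\F_q$-subspace of $V$ of dimension $2n$; this uses the $\sigma$-semilinearity of $f$ together with $Fix(\sigma)=\F_q$, while injectivity of $u\mapsto u+\rho f(u)$ follows from $T_1\cap T_2=\emptyset$. To show $L_{\rho,f}$ is scattered, I take $\lambda\in\F_{q^n}$ with $\lambda(u+\rho f(u))\in W_f$ and project onto $U_1$ and $U_2$ using $V=U_1\oplus U_2$; the two resulting identities force $\lambda^\sigma=\lambda$, hence $\lambda\in\F_q$. For each $P=\langle u\rangle\in T_1$ the line $\langle P,P^{\Phi_f}\rangle=\langle u,f(u)\rangle$ intersects $W_f$ in the $n$-dimensional $\F_q$-subspace $\{\alpha u+\rho\sigma(\alpha)f(u):\alpha\in\F_{q^n}\}$, so it has weight $n$; these $q^n+1$ lines are pairwise disjoint (any two of them span $V$), and $T_1,T_2$ are disjoint from $L_{\rho,f}$ while meeting each of them, so they serve as the transversal spaces of the pseudoregulus.

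\emph{Converse direction.} Now let $L=L_U$ be of pseudoregulus type with transversals $T_1=PG(U_1,\F_{q^n})$, $T_2=PG(U_2,\F_{q^n})$ and pseudoregulus $\mathcal{P}_L$. The disjointness $T_i\cap L=\emptyset$ gives $U\cap U_i=\{0\}$, and combined with $\dim_{\F_q}U=2n=\dim_{\F_q}U_i$ this makes both projections $U\to U_i$ into $\F_q$-linear bijections; hence $U=\{u+g(u):u\in U_1\}$ for a unique $\F_q$-linear bijection $g:U_1\to U_2$. Choose $\F_{q^n}$-bases $u_1,u_2$ of $U_1$ and $v_1,v_2$ of $U_2$ so that $\ell_i:=\langle u_i,v_i\rangle\in\mathcal{P}_L$ for $i=1,2$; the weight-$n$ condition on $\ell_i$ combined with the graph description of $U$ forces $g(\alpha u_i)=\phi_i(\alpha)v_i$ for $\F_q$-linear maps $\phi_i:\F_{q^n}\to\F_{q^n}$. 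For every other line $\ell\in\mathcal{P}_L$ meeting $T_1$ in $\langle \alpha u_1+\beta u_2\rangle$ with $\alpha\beta\neq 0$, the weight-$n$ condition forces $\{\phi_1(s\alpha)v_1+\phi_2(s\beta)v_2:s\in\F_{q^n}\}$ to lie on a single $\F_{q^n}$-line through the origin of $U_2$, yielding the functional equation
\[
\phi_1(s\alpha)\phi_2(\beta)=\phi_2(s\beta)\phi_1(\alpha)\qquad\forall s\in\F_{q^n}.
\]

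\emph{Main obstacle and conclusion.} Extracting a common field automorphism $\sigma$ from this equation is the technical heart of the converse: one shows successively that $\phi_2$ is a nonzero scalar multiple of $\phi_1$, that $\psi:=\phi_1/\phi_1(1)$ is multiplicative on $\F_{q^n}^*$, and hence $\psi\in Gal(\F_{q^n}/\F_q)$. Scatteredness of $L$ then pins down $Fix(\sigma)=\F_q$. With $\phi_i=c_i\sigma$ for suitable $c_i\in\F_{q^n}^*$, I define $f:U_1\to U_2$ as the $\sigma$-semilinear extension of $f(u_i):=c_iv_i/\rho$ for any prescribed $\rho\in\F_{q^n}^*$; this gives $g=\rho f$ and recovers $L=L_{\rho,f}$ with pseudoregulus $\{\langle P,P^{\Phi_f}\rangle:P\in T_1\}$ and transversal lines $T_1,T_2$.
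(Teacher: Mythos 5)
The paper does not actually prove Theorem \ref{thm:algebraicpseudoregulus}: it is stated as a quoted characterization, with the proof attributed to \cite[Sec. 2]{LMPT} and \cite[Thms 3.5 and 3.9]{LuMaPoTr-Sub}, so there is no in-paper argument to compare yours against. That said, your proposal is essentially the standard proof from those references and is sound in both directions: the direct part via the graph subspace $W_f=\{u+\rho f(u) : u\in U_1\}$ and the computation $\lambda=\lambda^{\sigma}$ forcing scatteredness and weight $n$ on each line $\langle u, f(u)\rangle$; the converse via the two projections $U\to U_i$ (bijective because $T_i\cap L=\emptyset$), the graph map $g$, the reduction of $g$ along each transversal direction to $\F_q$-linear bijections $\phi_i$ of $\F_{q^n}$, and the functional equation $\phi_1(s\alpha)\phi_2(\beta)=\phi_2(s\beta)\phi_1(\alpha)$ whose solution $\psi=\phi_1/\phi_1(1)$ is additive, multiplicative and bijective, hence a Galois automorphism $\sigma$, with $Fix(\sigma)=\F_q$ enforced by scatteredness exactly as you say. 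The one step you gloss over is in the direct direction: the definition of pseudoregulus type adopted in this paper requires that there exist \emph{exactly} two lines disjoint from $L_{\rho,f}$ meeting all $q^n+1$ long lines, and you only exhibit $T_1$ and $T_2$ without excluding a third such transversal. For $n\geq 3$ this exclusion is a short coordinate computation: writing $L_{\rho,f}=\{\langle(x,y,x^{\sigma},y^{\sigma})\rangle\}$, a putative transversal spanned by $(1,0,c,0)$ and $(0,1,0,d)$ can meet the pseudoregulus line over $\langle(a,b,0,0)\rangle$ only if $c\,a^{1-\sigma}=d\,b^{1-\sigma}$, and requiring this for all nonzero $a,b$ forces $c=d=0$ (the degenerate positions are handled similarly). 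This is precisely the point where the case $n=2$ behaves differently, so it should be added to make the direct direction complete.
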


In \cite{LuMaPoTr-Sub},  $\F_q$-linear sets of pseudoregulus type
of the projective line $\Lambda=PG(V,\F_{q^n})=PG(1,q^n)$ ($n\geq
2$) are also introduced. Let $P_1=\langle \underbar w \rangle$
and $P_2=\langle \underbar v \rangle$ be two distinct points of
the line $\Lambda$ and let $\tau$ be an $\F_q$-automorphism of
$\F_{q^n}$ such that $Fix(\tau)=\F_q$; then for each $\rho \in
\F_{q^n}^*$ the set
$$W_{\rho,\tau}=\{\lambda \underbar w +
\rho\lambda^{\tau}\underbar v  \,\, \colon \,\, \lambda \in
\F_{q^n}\},$$
 is an $\F_q$--vector subspace of $V$
of dimension $n$ and  $L_{\rho,\tau}:=L_{W_{\rho,\tau}}$ is a
maximum scattered $\F_q$-linear set of $\Lambda$. The linear sets
$L_{\rho,\tau}$  are called of {\it pseudoregulus type} and  the
points $P_1$ and $P_2$ are their {\it transversal points}.

\begin{remark}\label{rem:examplenonpseudoregulustype}\rm
By \cite[Remark 4.7]{LuMaPoTr-Sub}, each $\F_q$--linear set of
pseudoregulus type of the line $\Lambda=PG(1,q^n)$ is projectively
equivalent to
\begin{equation}\label{form:NormFormLine}
L_{1,m}=\{\langle(x,x^{q^m})\rangle: x\in\F_{q^n}^*\},
\end{equation} for some
$m\in\{1,\dots,n-1\}$ with $gcd(m,n)=1$. Moreover, $L_{1,m}$ and
$L_{1,m'}$ are projectively equivalent if and only if either
$m'=m$ or $m'=n-m$.\\
Also, if $L$ is an $\F_q$-linear set of pseudoregulus type of
$PG(3,q^n)$, and $s$ is a long line of it, then $L\cap s$ is an
$\F_q$-linear set of pseudoregulus type of the line $s$ whose
transversal points are the intersection points between $s$ and the
transversal lines of $\mathcal{P}_{L}$.
\end{remark}

\begin{defin}{\rm
If $L$ is an $\F_q$-linear set of $PG(r-1,q^n)$, a
line $r$ with respect to $L$ is said to be of {\it pseudoregulus type} if the
linear set $L\cap r$ is of pseudoregulus type.}
\end{defin}

In what follows our purpose is to show that a maximum scattered
$\F_q$-linear set $L$ in $\Lambda=PG(3,q^n)$, $n\geq 3$, having
two lines of pseudoregulus type either is of pseudoregulus type or
$n\geq 5$ and it has no other long lines.

\begin{theorem}\label{thm:ScatGenDi}
Each maximum scattered $\F_q$--linear set of $\Lambda=PG(3,q^n)$,
$n\geq 3$, with at least two lines of pseudoregulus type is
projectively equivalent to the linear set
\begin{equation}\label{form:NormForm}
L_{s,t}=\{\langle (x,x^{q^s},y,y^{q^t})\rangle: x, y\in\F_{q^n}, (x,y)\neq
(0,0)\},
\end{equation}
where $s,t\in\{1,\dots,n-1\}$ with $gcd(s,n)=gcd(t,n)=1$.
Moreover, $L_{s,t}$ is of pseudoregulus type if and only if either
$s=t$ or $s=n-t$. On the other hand, if $s\neq t$ and $s\ne n-t$,
then $n\geq 5$, and there are no other long lines of $L_{s,t}$.
\end{theorem}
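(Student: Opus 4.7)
The plan is to proceed in three stages: (1) prove that any two long lines in a maximum scattered linear set of $\Lambda=PG(3,q^n)$ with $n\geq 3$ must be skew; (2) use the parametrization of pseudoregulus-type linear sets on a line from Remark~\ref{rem:examplenonpseudoregulustype} to bring $L$ into the normal form $L_{s,t}$; and (3) analyze the equations cutting out a long line of $L_{s,t}$ to settle the pseudoregulus dichotomy and the ``no other long line'' assertion.

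For stage (1), let $\ell_i=PG(W_i,\F_{q^n})$, $i=1,2$, be two long lines of $L=L_U$, so $\dim_{\F_q}(U\cap W_i)=n$ while $\dim_{\F_q}U=2n$. If $\ell_1\cap\ell_2\neq\emptyset$, then $W_1\cap W_2$ is one-dimensional over $\F_{q^n}$ and, because $L$ is scattered, its weight in $L$ is at most $1$. Inclusion--exclusion gives $\dim_{\F_q}((U\cap W_1)+(U\cap W_2))\geq 2n-1$, so $L\cap PG(W_1+W_2,\F_{q^n})$ is a scattered $\F_q$-linear set of rank at least $2n-1$ in the plane $PG(2,q^n)$. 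For $n\geq 3$ this violates the bound $3n/2$ on the rank of a scattered linear set in $PG(2,q^n)$ from \cite[Thm.~4.3]{BL}. Hence $W_1\oplus W_2=V$ and $U=(U\cap W_1)\oplus(U\cap W_2)$. Applying Remark~\ref{rem:examplenonpseudoregulustype} to each $L\cap\ell_i$ yields vectors $w_i,v_i$ spanning $W_i$ and exponents $s_i$ coprime to $n$ such that $U\cap W_i=\{\lambda w_i+\rho_i\lambda^{q^{s_i}}v_i:\lambda\in\F_{q^n}\}$; the basis $(w_1,\rho_1v_1,w_2,\rho_2v_2)$ of $V$ then transforms $U$ into $\{(x,x^{q^s},y,y^{q^t}):x,y\in\F_{q^n}\}$, giving $L\cong L_{s,t}$.

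For stage (3), the ``if'' direction is explicit: $L_{s,s}$ is of pseudoregulus type by Theorem~\ref{thm:algebraicpseudoregulus} with transversal lines $\langle e_1,e_3\rangle_{\F_{q^n}}$ and $\langle e_2,e_4\rangle_{\F_{q^n}}$, while the case $s=n-t$ reduces to $s=t$ via the substitution $y=z^{q^s}$ and the swap of the last two coordinates. For the converse, suppose $s\neq t$ and $s\neq n-t$ and let $\ell=PG(W,\F_{q^n})$ be a long line distinct from $\ell_1=PG(\langle e_1,e_2\rangle,\F_{q^n})$ and $\ell_2=PG(\langle e_3,e_4\rangle,\F_{q^n})$. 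The argument of stage (1) applied to the pairs $(\ell,\ell_1)$ and $(\ell,\ell_2)$ forces $\ell$ to be skew to both, so $W=\{(u,v,au+bv,cu+dv):u,v\in\F_{q^n}\}$ with $ad-bc\neq 0$. Imposing $(x,x^{q^s},y,y^{q^t})\in W$ and substituting $y=ax+bx^{q^s}$ into $y^{q^t}=cx+dx^{q^s}$ produces the linearized polynomial
$$p(x)=b^{q^t}x^{q^{s+t}}+a^{q^t}x^{q^t}-dx^{q^s}-cx,$$
which must vanish identically on $\F_{q^n}$. Since $s,t\not\equiv 0\pmod n$, the exponents $0,s,t,s+t$ are pairwise distinct modulo $n$ precisely when $s\neq t$ and $s\neq n-t$; in that case the coefficients of $p$ are forced to vanish, giving $a=b=c=d=0$ and contradicting $\ell\neq\ell_1$. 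Hence $L_{s,t}$ has only the two long lines $\ell_1,\ell_2$, and therefore cannot be of pseudoregulus type, which would require $q^n+1$ long lines. A direct enumeration shows that for $n\in\{3,4\}$ the conditions $\gcd(s,n)=\gcd(t,n)=1$ force $s\in\{t,n-t\}$, so $n\geq 5$ whenever $s\neq t$ and $s\neq n-t$.

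The main technical step is the linearized-polynomial analysis of stage (3): parametrizing a hypothetical third long line as a graph in both directions, reducing the compatibility condition to the single $q$-polynomial $p(x)$, and recognizing that the exponents $0,s,t,s+t\pmod n$ are pairwise distinct exactly when $s\notin\{t,n-t\}$. By comparison, the rank count ruling out intersecting long lines and the coordinate normalization become routine once Remark~\ref{rem:examplenonpseudoregulustype} and the Blokhuis--Lavrauw bound are in hand.
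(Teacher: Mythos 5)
Your proposal is correct and follows essentially the same route as the paper: normalize via the two pseudoregulus-type lines to the form $L_{s,t}$, then show that a further long line must be skew to both and is cut out by the linearized polynomial $b^{q^t}x^{q^{s+t}}+a^{q^t}x^{q^t}-dx^{q^s}-cx$, whose exponents are pairwise distinct modulo $n$ exactly when $s\notin\{t,n-t\}$. The only (harmless) deviations are that you prove the skewness of long lines directly from the Blokhuis--Lavrauw rank bound where the paper cites \cite[Prop.~3.2]{LuMaPoTr-Sub}, and you obtain the pseudoregulus property for $s=t$ from Theorem~\ref{thm:algebraicpseudoregulus} rather than by counting the $q^n+1$ long lines and exhibiting the transversals.
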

\begin{proof}
Let $r_1$ and $r_2$ be two lines of pseudoregulus type of a maximum scattered $\F_q$--linear set $L$ of $\Lambda$.
By \cite[Prop. 3.2]{LuMaPoTr-Sub} the lines $r_1$ and $r_2$  are
disjoint and hence,  by Remark
\ref{rem:examplenonpseudoregulustype}, up to the action of
$P\Gamma L(4,q^n)$, we may assume that
$$r_1\cap L=\{\langle(x,x^{q^s},0,0)\rangle: x\in\F_{q^n}^*\},$$
$$r_2\cap L=\{\langle(0,0,y,y^{q^t})\rangle: y\in\F_{q^n}^*\}$$ and hence
$$L_{s,t}:=L=\{\langle(x,x^{q^s},y,y^{q^t})\rangle: x, y\in\F_{q^n},
 (x,y)\neq
(0,0)\},$$
 where
$s,t\in\{1,\dots,n-1\}$ with $\gcd(s,n)=\gcd(t,n)=1$.
If $m$ is a long line with respect to $L$ and $m$ is disjoint from $r_2$, then
$$m=\{\langle (x,y,ax+by,cx+dy)\rangle ~:~x,y \in\F_{q^n}, (x,y)\neq (0,0)\},$$
for some $a,b,c,d \in \F_{q^n}$. The points of $m$ contained in $L$ must satisfy
$$
y=x^{q^s},~\mbox{and}~cx+dy=(ax+by)^{q^t},
$$
which results in $cx+dx^{q^s}-a^{q^t}x^{q^t}-b^{q^t}x^{q^{s+t}}=0$ (where $s+t$ should be thought of modulo $n$). This must hold for all $x\in \F_{q^n}$, since $m$ is a long line
with respect to $L$, and hence we obtain a polynomial identity since the degree is less than $q^n$.
If $s\neq t$ and $s+t\neq 0$ (modulo $n$), then $n\geq 5$, $b=d=c=a=0$ and $m=r_1$, and it follows that $r_1$ and $r_2$ are the only long lines
with respect to $L$. If $s=t$ then $d=a^{q^t}$ and $c=b=0$ (note that in this case $s+t\neq 0$ modulo $n$), and it follows that there are exactly $q^n+1$ long lines with respect to $L$.
Moreover the line $t_1$, generated by $\langle (1,0,0,0)\rangle$ and $\langle (0,0,1,0)\rangle$, and the line
$t_2$, generated by $\langle (0,1,0,0)\rangle$ and $\langle (0,0,0,1)\rangle$, both intersect each of these long lines.
We may conclude that $L$ is of pseudoregulus type.
The case $s+t=n$ is completely analogous to the case $s=t$ and is left to the reader.
\end{proof}

\section{Linear set associated with a rank 2 semifield}\rm
The semifields that we study here are 2n-dimensional $\F_q$-algebras
having at least one nucleus of order $q^n$: {\it rank two semifields
of order $q^{2n}$}. The Knuth orbit of such a semifield contains an
isotopism class $[\bS]$ whose left nucleus has size ${q^n}$, and
with the semifield $\bS$ there is associated an $\F_q$-linear set
$L(\bS)$ of rank $2n$, disjoint from the hyperbolic quadric
$\cQ=Q^+(3,q^n)$ of the projective space $PG(3,q^n)$ with equation
$X_0X_3-X_1X_2=0$. The isotopy class $[\bS]$ corresponds to the
orbit of $L(\bS)$ under the subgroup $\cG \leq {\mathrm{P\Gamma
O}}^+(4,q^n)$ fixing the reguli of $\cQ$. Furthermore, the image of
$L(\bS)$ under an element of ${\mathrm{P\Gamma O}}^+(4,q^n)\setminus
\cG$, defines a (pre)semifield which is isotopic to the {\it
transpose} semifield $\bS^t$ of $\bS$ (for more details see
\cite{LaPo2011}).

Let $Tr_{q^n/q}$ denote the trace function of $\F_{q^n}$ over $\F_q$
and let $b(X,Y)$ be the bilinear form associated with $\cQ$. By
field reduction we can use the bilinear form $Tr_{q^n/q}(b(X,Y))$ to
obtain another $\F_q$-linear set, say $L(\bS)^{\perp}$, of rank $2n$
disjoint from $\cQ$. The linear set $L(\bS)^\perp$ defines a rank
two semifield, say $\bS^{\perp}$, as well; such a semifield is
called the {\it translation dual} of $\bS$, see e.g.
\cite{LuMaPoTr2008}, or \cite[\S 3]{LaPo2011}. We will refer to
$\bS^t$ and $\bS^{\perp}$ as the {\it rank two derivatives} of
$\bS$.

\bigskip

\noindent Here we list the known families of (pre)semifields
2-dimensional over their left nucleus, $2n$ dimensional over their
center $\F_q$ and exiting for infinite values of $n\geq 2$, pointing out
their nuclei and describing the geometric structure of the
associated linear set with respect to the action of the group $\cG$.
For a complete list of the known (pre)semifields see \cite[\S
6]{LaPo2011}.

\begin{itemize}
\item[${\cal K}_{17}$]  {\em Knuth semifields} :\quad $(\F_{q^n}\times\F_{q^n},+,\star)$,
$$ (u,v) \star (x,y)=(xu+y^{q^{r}}vf, uy+vx^{q^r}+vy^{q^{r}}g)=(u,v)\left(
                                                                      \begin{array}{cc}
                                                                        x & y \\
                                                                        fy^{q^r} & x^{q^r}+gy^{q^r} \\
                                                                      \end{array}
                                                                    \right), $$
where $gcd(r,n)=1$, $f,g\in \F_{q^n}$ with $x^{q^r+1}+gx-f\neq 0$ $\,
\forall x\in  \F_{q^n}$. These semifields also have the middle
nucleus of order $q^n$.
The associated linear set is
$$L_{{\cal K}_{17}}=\{\langle(x,y,fy^{q^r},x^{q^r}+gy^{q^r})\rangle\,:\, x,y \in
\F_{q^n}, (x,y)\ne (0,0)\},$$ and it is a scattered $\F_q$--linear set of pseudoregulus type
with the transversals both contained in the quadric $\cQ$ (see
\cite[Prop. 5.7]{LuMaPoTr-Sub}).

\item[${\cal K}_{19}$]  {\em Knuth semifields} :\quad $(\F_{q^n}\times\F_{q^n},+,\star)$,
$$ (u,v) \star (x,y)=(xu+y^{q^{n-r}}vf, uy+vx^{q^r}+vyg)=(u,v)\left(
                                                                      \begin{array}{cc}
                                                                        x & y \\
                                                                        fy^{q^{n-r}} & x^{q^r}+gy\\
                                                                      \end{array}
                                                                    \right),$$ where
$gcd(r,n)=1$, $f,g\in \F_{q^n}$ with $x^{q^r+1}+gx-f\neq 0$ $\, \forall
x\in  \F_{q^n}$. These presemifields have also the right nucleus of
order $q^n$. The associated linear set is
$$L_{{\cal K}_{19}}=\{\langle(x,y,fy^{q^{n-r}},x^{q^r}+gy)\rangle\,:\, x,y \in \F_{q^n},  (x,y)\ne (0,0)\},$$
and, also in this case, it is a scattered $\F_q$--linear set of pseudoregulus
type with the transversals both contained in the quadric $\cQ$ (see
\cite[Prop. 5.7]{LuMaPoTr-Sub}).

\item[$\mathcal{TP}$] {\em Thas--Payne symplectic semifields and their translation duals} {\rm\cite{TP1994}}: \quad $(\F_{3^{2n}},+,\star)$\, $n \geq 2$.
The linear set associated with any $\mathcal TP$ symplectic
semifield is a union of lines contained in a plane of $PG(3,3^n)$;
whereas, the linear set associated with its translation dual
$\mathcal{TP}^{\perp}$  is a union of lines passing through a point
(see \cite{Lunardon2003}).

\item[$\mathcal{GD}$] {\em Generalized Dickson semifields} {\rm\cite{Dickson1906}}:\quad $(\F_{q^n}\times\F_{q^n},+,\star)$\, $n>1$,
$$ (u,v) \star (x,y)=(xu+vy^{q^t}f,uy+vx^{q^s})=(u,v)\left(
                                                                      \begin{array}{cc}
                                                                        x & y \\
                                                                        fy^{q^t} & x^{q^s}\\
                                                                      \end{array}
                                                                    \right),$$
where $s,t\in\{0,1,\dots,n-1\}$, $(s,t)\ne (0,0)$, $gcd(s,t,n)=1$
and $f\in \F_{q^n}$ such that $x^{q^s+1}-fy^{q^t+1}\neq 0$ $\,
\forall\, (x,y) \in  \F_{q^n}^2\setminus\{(0,0)\}$ (see
{\rm\cite{Dickson1905}}, {\rm\cite{Dickson1906}},
{\rm\cite{Dickson1906-1}}). These presemifields have middle nucleus
of order $q^{gcd(t-s,n)}$ and right nucleus of order
$q^{gcd(t+s,n)}$. The associated linear set is
$$L_{\cGD}=\{\langle(x,y,fy^{q^t},x^{q^s})\rangle\,:\, x,y \in
\F_{q^n}, (x,y)\ne (0,0)\}.$$

If $s=0$, then the associated linear set $L_{\cGD}$ is a union of lines through the point $(1,0,0,1)$ of weight $n$ and it is contained in the plane $x_0=x_3$. When $t=0$, we have the same geometric configuration.

If $s,t>0$, then  $L_{\cGD}$ is not contained in a plane and the
following cases occur:\medskip

\fbox{Scattered case.}\quad In this case  $gcd(s,n)=gcd(t,n)=1$ and $L_{\cGD}$ is equivalent (up to the action of the group $P\Gamma L(4,q^n)$) to the linear set (\ref{form:NormForm}) of Theorem \ref{thm:ScatGenDi}. Hence, it is a maximum
scattered $\F_q$-linear set of $PG(3,q^n)$ and by Theorem
\ref{thm:ScatGenDi}, $L_{\cGD}$ is of pseudoregulus type if and only
if either $s=t$ or $s+t=n$. Also, in such a case $\cGD$ is a
2-dimensional Knuth semifield $\cK_{17}$ or $\cK_{19}$ (with $g=0$). In the other cases, i.e. $n\geq 5$,
$s\neq t$ and $s\neq n-t$, by Theorem \ref{thm:ScatGenDi}, the lines $r:x_1=x_2=0$ and $r^\perp:x_0=x_3=0$ (where $\perp$ is the polarity induced by the quadric $\cQ=Q^+(3,q^n)$) are
the only long lines of the linear set $L_{\cGD}$ of $PG(3,q^n)$
and by Remark \ref{rem:examplenonpseudoregulustype}, they are both of pseudoregulus
type.

\fbox{Non--scattered case.} We can have the following possibilities:\\
$\bullet$\ \ $gcd(s,n)=h>1$ and $gcd(t,n)=1$. \quad  In this case the points of weight greater than 1 are only on the line $r:x_1=x_2=0$ and each of them has weight $h$. Hence, $|L_{\cGD}|=\frac{q^{2n}-q^n}{q-1}+\frac{q^n-1}{q^h-1}$. \\
$\bullet$\ \ $gcd(t,n)=k>1$ and $gcd(s,n)=1$. \quad  In this case the points of weight greater than 1 are only on the line $r^\perp:x_0=x_3=0$ and each of them has weight $k$. Hence, $|L_{\cGD}|=\frac{q^{2n}-q^n}{q-1}+\frac{q^n-1}{q^k-1}$.\\
$\bullet$\ \ $gcd(s,n)=h>1$ and $gcd(t,n)=k>1$. \quad  In this case the points of weight greater than 1 are only on the lines $r$ and $r^\perp$ and each of them has weight $h$ and $k$, respectively. Hence, $|L_{\cGD}|=\frac{(q^{n}-1)^2}{q-1}+\frac{q^n-1}{q^h-1}+\frac{q^n-1}{q^k-1}$.

\item[$\cA$] {\em Generalized Twisted Fields of rank 2} {\rm\cite{Albert1961P}}:\quad $(\F_{q^{2n}},+,\star)$,  with
$ x \star y=yx-cy^{q^t}x^{q^n},$ where $gcd(t,n)=1$  and $c \neq y^{1-q^t}x^{1-q^n}$ for each $x,y\in\F_{q^{2n}}^*$.
These presemifields have middle nucleus of size $q^{gcd(t+n,2)}$ and
right nucleus of size $q^{gcd(t,2)}$ {\rm (see \cite{Albert1961A})}.
The associated linear set is an $\F_q$--linear set of $PG(3,q^n)$ of
pseudoregulus type with transversal lines external to the quadric
$\cQ$ and pairwise polar with respect to the polarity defined by
$\cQ$ (see \cite[Corollary 5.5]{LuMaPoTr-Sub}).

\item[${\mathcal {JMPT}}$] {\em Johnson--Marino--Polverino--Trombetti semifields} {\rm\cite{JoMaPoTr2009}}:\quad $(\F_{q^{2n}},+,\star)$,  with $n\geq 3$ odd. These semifields have middle and right nuclei both of order $q^2$ (\cite[Thm. 1]{JoMaPoTr2009}). The associated linear set $L$ contains a unique line of $PG(3,q^n)$ and there are at least $q+1$ points of this line with weight $\frac{n+1}2$ in $L$ (\cite[Thm. 2]{JoMaPoTr2009} and also \cite[Prop. 3.2, Remark 3.4]{EbMaPoTr2009}).

\item[${\mathcal {EMPT}1}$] {\em Ebert--Marino--Polverino--Trombetti semifields} (odd case) {\rm\cite{EbMaPoTr2009}}:\quad $(\F_{q^{2n}},+,\star)$,  with $n\geq 3$ odd. These semifields have middle and right nuclei both of order $q$ (\cite[Thms 4.1,4.2,4.3,4.4,4.5]{EbMaPoTr2009}). The associated linear set $L$ contains a unique line of $PG(3,q^n)$ and there are $q+1$ points of this line with weight $\frac{n+1}2$ in $L$ (see \cite[Prop. 3.2]{EbMaPoTr2009}).

\item[${\mathcal {EMPT}2}$] {\em Ebert--Marino--Polverino--Trombetti semifields} (even case) {\rm\cite{EbMaPoTr2009}}:\quad $(\F_{q^{2n}},+,\star)$,  with $n\geq 4$ even. These semifields have middle and right nuclei both of order $q^2$ (\cite[Thm. 4.6]{EbMaPoTr2009}). The geometric structure of the associated linear is described in \cite[Prop. 3.3]{EbMaPoTr2009}.
\end{itemize}

We end this section observing that the transposes of the above
listed presemifields remain in the list. Indeed, we have that the
transpose of  a semifield of type ${\cal K}_{17}$ is a semifield of
type ${\cal K}_{19}$ (\cite[\S 5]{Knuth1965}), whereas the other
families are closed under taken the transpose (\cite[\S
4]{LuMaPoTr2008}, \cite[Cor. 3]{JoMaPoTr2009}, \cite{EbMaPoTr2009}).

Finally, we determine the linear set associated with the translation
dual of a presemifield defined by a linear set of type
$$L_{f,g}=\{\langle(x,y,f(y), g(x))\rangle\, : \, x,y \in \F_{q^n}, (x,y) \neq (0,0)\} \subset PG(3,q^n),$$
where $f$ and $g$ are $\F_q$--linear maps of $\F_{q^n}$. In order to
do this, if $a(x)=\sum_{i=0}^{n-1}a_i x^{q^i}$, we will denote by
$\hat{a}(x)=\sum_{i=0}^{n-1}a_i^{q^{n-i}} x^{q^{n-i}}$ the adjoint
polynomial of $a(x)$ with respect to the non-degenerate
$\F_q$-bilinear form $\langle x,y \rangle = Tr_{q^n/q}(xy)$ of
$\F_{q^n}$, i.e. $Tr_{q^n/q}(x\,a(y))=Tr_{q^n/q}(\hat{a}(x)\,y)$. We
have the following

\begin{prop}\label{prop:translation_dual}
The isotopism class of the translation dual of a semifield defined
by the linear set
$$L_{f,g}=\{\langle(x,y,f(y), g(x))\rangle\, : \, x,y \in \F_{q^n}, (x,y) \neq (0,0)\} \subset PG(3,q^n),$$
is represented by the presemifield corresponding to the linear set
$$L_{f,g}^{\perp}=\{\langle (x,y, \hat{f}(y), \hat{g}(x))\rangle\, : \, x,y \in \F_{q^n}, (x,y) \neq (0,0)\} \subset
PG(3,q^n),$$ where $\perp$ is the translation dual operation with respect to the bilinear form defined by the hyperbolic quadric with equation $X_0X_3-X_1X_2=0$.
\end{prop}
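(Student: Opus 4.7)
My plan is to unpack the definition of the translation dual as the $\F_q$-orthogonal complement with respect to the trace-of-bilinear-form, evaluate that orthogonality condition on a generic vector against the parametric description of $U_{f,g}$, and then read off the answer using the defining property of the adjoints $\hat f, \hat g$. A small cosmetic step at the end reconciles the minus signs that naturally appear with the sign-free expression $L_{\hat f,\hat g}$.

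In detail: the $\F_q$-bilinear form associated with $\cQ$ is $b(X,Y)=X_0Y_3+X_3Y_0-X_1Y_2-X_2Y_1$, and the translation dual linear set is $L(U_{f,g}^\perp)$ where $U_{f,g}^\perp$ is taken with respect to $B(X,Y)=Tr_{q^n/q}(b(X,Y))$. Let $\a=(a_0,a_1,a_2,a_3)\in\F_{q^n}^4$. The condition $B(\a,(x,y,f(y),g(x)))=0$ for all $x,y\in\F_{q^n}$ reads
$$Tr_{q^n/q}\bigl(a_0g(x)+a_3 x-a_1f(y)-a_2 y\bigr)=0.$$
Using the adjoint identity $Tr_{q^n/q}(u\,\varphi(v))=Tr_{q^n/q}(\hat\varphi(u)\,v)$ this becomes
$$Tr_{q^n/q}\bigl((\hat g(a_0)+a_3)x\bigr)-Tr_{q^n/q}\bigl((\hat f(a_1)+a_2)y\bigr)=0.$$
Since $x$ and $y$ vary independently over $\F_{q^n}$ and the trace form is non-degenerate, the two contributions vanish separately, giving $a_3=-\hat g(a_0)$ and $a_2=-\hat f(a_1)$. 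Thus
$$U_{f,g}^\perp=\{(a_0,a_1,-\hat f(a_1),-\hat g(a_0))\,:\,a_0,a_1\in\F_{q^n}\}.$$

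To conclude, I will observe that $U_{f,g}^\perp$ and the subspace $U_{\hat f,\hat g}=\{(x,y,\hat f(y),\hat g(x))\}$ defining $L_{\hat f,\hat g}$ differ only by the involution $\varphi:(x_0,x_1,x_2,x_3)\mapsto(x_0,x_1,-x_2,-x_3)$, so it suffices to check that $\varphi\in\cG$. This is immediate: $\varphi$ sends the quadratic form $X_0X_3-X_1X_2$ to its negative (so it fixes $\cQ$), and on the standard parametrization of the two systems of generators it merely sends the line of "slope" $\alpha$ to the line of slope $-\alpha$ within the same regulus; hence each regulus is preserved. Therefore $U_{f,g}^\perp$ and $U_{\hat f,\hat g}$ lie in the same $\cG$-orbit and define isotopic presemifields, which is precisely the claim.

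The only non-mechanical point is step three, namely recognising that the sign discrepancy coming out of the bilinear form does not leave the isotopism class; everything else reduces to the linearity of the trace and the defining identity of the adjoint, so I do not expect any serious obstacle.
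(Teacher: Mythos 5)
Your proposal is correct and follows essentially the same route as the paper: compute the orthogonal complement of $U_{f,g}$ with respect to $Tr_{q^n/q}(b(\cdot,\cdot))$, obtain $\{(a_0,a_1,-\hat f(a_1),-\hat g(a_0))\}$, and then absorb the signs via an element of $\cG$. The only difference is presentational — the paper states the complement and verifies orthogonality by one trace identity, while you derive it by separating the $x$- and $y$-contributions, and you additionally spell out why the sign-change collineation preserves both reguli, a detail the paper leaves implicit.
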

\begin{proof}
Let $\cal{Q}$ be the hyperbolic quadric of $PG(3,q^n)$ with equation
$X_0X_3-X_1X_2=0$ and $b({\bf X},{\bf
Y})=X_3Y_0-X_2Y_1-X_1Y_2+X_0Y_3$  be the bilinear form associated
with $\cQ$. One easily verifies (using the same calculation as in
\cite[page 3-4]{Lavrauw2005c} to obtain the subspace $U^D$ from $U$)
that the linear set obtained from
$$L_{f,g}=\{\langle(x,y,f(y), g(x))\rangle\, : \, x,y \in \F_{q^n}, (x,y) \neq (0,0)\} \subset PG(3,q^n),$$
performing the translation dual operation is
$$L_{f,g}^{\perp}=\{\langle (z, w,-\hat{f}(w),-\hat{g}(z))\rangle\, : \, w,z \in \F_{q^n}, (w,z) \neq (0,0)\} \subset PG(3,q^n);
$$ indeed $$Tr_{q^n/q}(b((x,y,f(y),g(x)),(z,w,-\hat{f}(w),-\hat{g}(z))))=Tr_{q^n/q}(g(x)z-f(y)w+y\hat{f}(w)-x\hat{g}(z))=0,$$
and  $L_{f,g}^{\perp}$  is equivalent under $\cG$ to
$$L_{f,g}^{\perp}=\{\langle (x,y, \hat{f}(y), \hat{g}(x))\rangle\, : \, x,y \in \F_{q^n}, (x,y) \neq (0,0)\} \subset PG(3,q^n).
$$
\end{proof}

\section{Dempwolff's presemifields}\label{sec:demp}

In \cite{De} the author introduced three families of rank 2 presemifields,
computing their nuclei and leaving open the question to investigate
the isotopy issue between these presemifields and each presemifield
previously known. These presemifields are obtained starting from a
pair $(F_1,F_2)$ of bijective $\F_q$-linear maps of $\F_{q^n}$, {\bf
$q$ odd and $n$ odd}, such that
\begin{equation} \label{Comd:Demp}
|P_{F_1}(\F_{q^n}^*)|=|P_{F_2}(\F_{q^n}^*)|=\frac{q^n-1}{2} \quad
\mbox{and} \quad P_{F_1}(\F_{q^n}^*) \cap \xi P_{F_2}(\F_{q^n}^*) =
\emptyset,
\end{equation} where $P_{F_i}(x)=F_i(x)x$ and $\xi$ is a fixed non-square
element of $\F_q$. Precisely, the algebraic structure
$\bS(F_1,F_2)=(\F_{q^n}\times \F_{q^n},+,\ast),$ whose
multiplication is given by
$$(u,v)\ast(x,y)=(u,v)\begin{pmatrix} x & y \\ F_1(y) & \xi F_2(x)
\end{pmatrix}, \quad \quad (*)$$ \noindent where $F_1$ and $F_2$ are bijective $\F_q$-linear maps of $\F_{q^n}$ satisfying Conditions (\ref{Comd:Demp}), is a presemifield with dimension at most
2 over the left nucleus (see \cite[Theorem 4.1]{De}). The three
families of proper presemifields $\bS(F_1,F_2)$ constructed by
Dempwolff are obtained considering $F_1$ and $F_2$ as follows:
\begin{itemize}
\item[$A)$]  $F_1(x)=F_2(x)=A_{a,r}(x)=x^{q^r}-ax^{q^{-r}}$ such
that $gcd(n,r)=1$ and $a$ is an element of $\F_{q^n}^*$ with $N_q(a) \neq 1$, where $N_q\,:\,\F_{q^n} \rightarrow \F_{q}$ denotes the norm function from $\F_{q^n}$ over $\F_q$; \item[$B)$]
$F_1(x)=F_2(x)=B_{b,r}(x)=2H_{b,r}^{-1}(x)-x$ such that
$H_{b,r}(x)=x-bx^{q^r}$, $gcd(n,r)=1$ and $b \in \F_{q^n}^*$ with $N_q(b) \neq \pm 1$;
\item[$AB)$]  $F_i(x)=A_{b^2,r}(x)$, $F_j(x)=B_{b,-r}(x)$,
$\{i,j\}=\{1,2\}$ such that $gcd(n,r)=1$ and $N_q(b) \neq \pm 1$.
\end{itemize}
We will denote the corresponding presemifields as $\cD_A$, $\cD_B$
and $\cD_{AB}$, respectively. It is easy to see that
$\bS(F_1,F_2)$ is isotopic to $\bS(F_2,F_1)$, so in what follows
 we assume that, in case $AB)$,
$F_1(x)=A_{b^2,r}(x)$ and  $F_2(x)=B_{b,-r}(x)$. Moreover,
$\bS(F_1^{-1},F_2)$ is obtained from $\bS(F_1,F_2)$ by transposition
as mentioned in \cite[\S 4]{De}.

\begin{remark}{\rm (\cite[Theorem 4.3]{De})\label{rem:nucleiDemp}
All presemifields $\cD_A$, $\cD_B$ and $\cD_{AB}$ have left nucleus of size $q^n$ and middle nucleus and center both of size $q$. Moreover, presemifields $\cD_A$ and $\cD_B$ have right nucleus of size $q^2$, whereas presemifields $\cD_{AB}$ have right nucleus of size $q$.}
\end{remark}

In order to compare Dempwolff presemifields with the known
(pre)semifields listed in the previous section, in the next,  we
will investigate the geometric structure of the associated linear
sets.

Let $\bS(F_1,F_2)$  be a presemifield with multiplication $(*)$,
then the associated $\F_q$-linear set is the following
$$L(F_1,F_2)=\{\langle(x,y,F_1(y),\xi F_{2}(x))\rangle\, : \, x,y \in \F_{q^n}, (x,y) \neq (0,0)\} \subset PG(3,q^n).$$

\noindent The linear set $L(F_1,F_2)$ admits two long lines $r_1$ and $r_2$, i.e.
lines of weight $n$ in $L(F_1,F_2)$, both  secants to the quadric
$\cQ=Q^+(3,q^n):\, X_0X_3-X_1X_2=0$,  which are pairwise polar with
respect to the polarity $\perp$ defined by  $\cQ$; in fact, $$
r_1:X_1=X_2=0 \quad \text{and}\quad  r_2=r_1^{\perp}: X_0=X_3=0.$$

Concerning the translation dual we
have the following.
\begin{prop}\label{prop:transl-dual}
The translation dual of a presemifield of type $\cD_A$ ($\cD_B$, $\cD_{AB}$, respectively) is a presemifield is of type $\cD_A$ ($\cD_B$, $\cD_{AB}$, respectively).
\end{prop}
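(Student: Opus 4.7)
The plan is to combine Proposition~\ref{prop:translation_dual} with a direct calculation of the adjoint polynomials $\widehat{A_{a,r}}$ and $\widehat{B_{b,r}}$, showing that each adjoint is again a Dempwolff map of the same type, with parameters still satisfying the admissibility conditions of Section~\ref{sec:demp}.

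The associated linear set of $\bS(F_1,F_2)$ is $L(F_1,F_2)$, which has exactly the shape $L_{F_1,\xi F_2}$ in the notation of Proposition~\ref{prop:translation_dual}. Since $\xi\in\F_q$ and the adjoint operation is $\F_q$-linear, $\widehat{\xi F_2}=\xi\widehat{F_2}$, so Proposition~\ref{prop:translation_dual} reduces the statement to showing that the pair $(\widehat{F_1},\widehat{F_2})$ defines a presemifield of the same Dempwolff type as $(F_1,F_2)$; concretely I need to understand what $\widehat{A_{a,r}}$ and $\widehat{B_{b,r}}$ look like.

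For type $A$, applying the definition $\widehat{a}(x)=\sum a_i^{q^{n-i}}x^{q^{n-i}}$ termwise to $A_{a,r}(x)=x^{q^r}-ax^{q^{-r}}$ gives $\widehat{A_{a,r}}(x)=x^{q^{-r}}-a^{q^r}x^{q^r}=A_{a^{q^r},-r}(x)$. For type $B$, I first compute $\widehat{H_{b,r}}=H_{b^{q^{-r}},-r}$ in the same way, and then use the identity $\widehat{\phi^{-1}}=(\widehat{\phi})^{-1}$, valid for any invertible $\F_q$-linear endomorphism with respect to the nondegenerate symmetric form $\langle x,y\rangle=Tr_{q^n/q}(xy)$, to obtain $\widehat{B_{b,r}}=B_{b^{q^{-r}},-r}$. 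In both cases $\gcd(-r,n)=\gcd(r,n)=1$, and since the norm is invariant under the Frobenius, $N_q(a^{q^r})=N_q(a)$ and $N_q(b^{q^{-r}})=N_q(b)$, so Dempwolff's conditions $N_q(a)\neq 1$ and $N_q(b)\neq\pm 1$ are preserved.

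Combining these two formulas with the structure of the three families finishes the job: for $\cD_A$ (resp. $\cD_B$) one starts with $F_1=F_2=A_{a,r}$ (resp. $B_{b,r}$) and lands on $\widehat{F_1}=\widehat{F_2}=A_{a^{q^r},-r}$ (resp. $B_{b^{q^{-r}},-r}$), still of type $\cD_A$ (resp. $\cD_B$). For $\cD_{AB}$, setting $b'=b^{q^r}$ and $r'=-r$, the adjoint pair $(\widehat{F_1},\widehat{F_2})=(A_{(b')^2,r'},\,B_{b',-r'})$ is exactly the Dempwolff type $AB$ prescription. The only step requiring a careful justification will be the identity $\widehat{\phi^{-1}}=(\widehat{\phi})^{-1}$ used to handle the inverse inside $B_{b,r}$; every other step is a mechanical polynomial calculation, so I do not anticipate further obstacles.
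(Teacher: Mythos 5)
Your proposal is correct and follows essentially the same route as the paper: the paper's proof is precisely the one-line application of Proposition~\ref{prop:translation_dual} together with the identities $\hat A_{a,r}=A_{a^{q^r},-r}$ and $\hat B_{b,r}=B_{b^{q^{-r}},-r}$, which you derive in full. Your additional verifications (the identity $\widehat{\phi^{-1}}=(\widehat{\phi})^{-1}$ needed to handle $H_{b,r}^{-1}$, the factor $\xi$, and the preservation of $\gcd(r,n)=1$ and of the norm conditions under Frobenius) are details the paper leaves implicit, and they check out.
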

\begin{proof}
Applying Proposition \ref{prop:translation_dual}, since $\hat A_{a,r}(x)=A_{a^{q^r},-r}(x)$ and $\hat B_{b,r}(x)=B_{b^{q^{-r}},-r}(x)$ we have that the translation dual of a presemifield of type $\cD_A$ ($\cD_B$, $\cD_{AB}$, respectively) is a presemifield is of type $\cD_A$ ($\cD_B$, $\cD_{AB}$, respectively) as well.
\end{proof}

\subsection*{$\cD_A$ presemifields}

The linear set associated with a presemifield $\cD_A$ is
$$L_{\cD_A}=\{\langle(x,y,A_{a,r}(y),\xi A_{a,r}(x))\rangle\, : \, x,y \in \F_{q^n}, (x,y) \neq (0,0)\} \subset PG(3,q^n),$$
where  $A_{a,r}(x)=x^{q^r}-ax^{q^{-r}}$ with $gcd(n,r)=1$ and $a\in\F_{q^n}^*$ such that $N_q(a)\ne 1$, and $\xi$ a fixed
nonsquare in $\F_q$.\\

We start with the following lemma.
\begin{lemma} \label{lemma:LineCASEA}
Let $L_A$ be the following $\F_q$-linear set of rank $n$ of
the line $PG(1,q^n)$\\
$$L_A=\{\langle(x,A_{a,r}(x))\rangle\,:\, x\in\F_{q^n}^*\},
$$ where  $A_{a,r}(x)=x^{q^r}-ax^{q^{-r}}$ such that $gcd(n,r)=1$ and $N_q(a)\ne 1$. Then the following statements hold true:
\begin{itemize}
\item[$(i)$] $L_A$ is scattered if and only if $N_q(a) \neq -1$ and in such a
case, if $n>3$,  $L_A$ is not of pseudoregulus type.
\item[$(ii)$]  If $N_q(a) = -1$, then the points of $L_A$ of weight greater
than one have weight $2$, and they are $(q^{n-1}-1)/(q^2-1)$ in number,
hence $|L_A|=q^{n-1}+\frac{q^{n-1}-1}{q^2-1}$.
\end{itemize}
\end{lemma}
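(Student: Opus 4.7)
The weight of $\langle(x_0,A_{a,r}(x_0))\rangle$ equals $\dim_{\F_q}V_\lambda$, where $V_\lambda=\{x\in\F_{q^n}:A_{a,r}(x)=\lambda x\}$ and $\lambda=A_{a,r}(x_0)/x_0$. I would first bound this weight by $2$: three $\F_q$-independent solutions $x_1,x_2,x_3$ would give an $\F_{q^n}$-linear dependence among the three rows of the matrix whose $i$-th column is $(x_i^{q^{-r}},x_i,x_i^{q^r})^{T}$, via the relation $x_i^{q^r}-\lambda x_i-a x_i^{q^{-r}}=0$, forcing its determinant to vanish; but after raising to $q^r$ this determinant is the Moore determinant in the generator $\sigma_r$ of $\mathrm{Gal}(\F_{q^n}/\F_q)$, which is nonzero precisely when $x_1,x_2,x_3$ are $\F_q$-independent. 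If $\dim V_\lambda=2$ with basis $\{x_1,x_2\}$, then treating $x_i^{q^r}-a x_i^{q^{-r}}=\lambda x_i$ as a $2\times 2$ linear system in $(a,\lambda)$ and applying Cramer yields $a=-D^{q^r-1}$ with $D=x_1^{q^{-r}}x_2-x_2^{q^{-r}}x_1\neq 0$, and taking $\F_q$-norms with $n$ odd gives $N_q(a)=(-1)^n N_q(D)^{q^r-1}=-1$. This already establishes ``scattered $\Leftarrow N_q(a)\neq-1$'' in (i), as well as the first half of~(ii).

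For the converse and the count in~(ii), I would introduce
\[
\phi\colon\mathrm{Gr}_2(\F_{q^n})\longrightarrow \F_{q^n}^*/\F_q^*,\qquad W\longmapsto D(x_1,x_2)\,\F_q^*,
\]
which is well-defined (a $GL_2(\F_q)$-change of basis rescales $D$ by its determinant in $\F_q^*$) and $\F_{q^n}^*$-equivariant via $\phi(\alpha W)=\alpha^{q^{-r}+1}\phi(W)$. Reversing the Cramer computation shows that the weight-$2$ points of $L_A$ correspond bijectively to the $W$ with $\phi(W)^{q^r-1}=-a$. The crux is transitivity of the induced $\F_{q^n}^*$-action on $\F_{q^n}^*/\F_q^*$: since $\gcd(2r,n)=1$ for $n$ odd one gets $\gcd(q^r+1,q^n-1)=2$, so the image of $\alpha\mapsto\alpha^{q^{-r}+1}$ is $(\F_{q^n}^*)^2$; a short computation using $(q^n-1)/2\equiv(q-1)/2\pmod{q-1}$ for $n$ odd shows $\F_q^*\cap(\F_{q^n}^*)^2=(\F_q^*)^2$, whence $(\F_{q^n}^*)^2\cdot\F_q^*=\F_{q^n}^*$. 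Combined with surjectivity of $\phi$, transitivity forces every fibre of $\phi$ to have size $|\mathrm{Gr}_2(\F_{q^n})|/|\F_{q^n}^*/\F_q^*|=(q^{n-1}-1)/(q^2-1)$. When $N_q(a)=-1$ the relevant fibre (over any $y$ with $y^{q^r-1}=-a$) is nonempty and gives the count in~(ii); when $N_q(a)\neq -1$ it is empty, completing~(i).

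For the non-pseudoregulus assertion, suppose for contradiction that $L_A$ is projectively equivalent to some $L_{1,m}$ with $\gcd(m,n)=1$. Frobenius stabilises $L_{1,m}$, so the equivalence is realised by a matrix $M=\bigl(\begin{smallmatrix}\alpha&\beta\\ \gamma&\delta\end{smallmatrix}\bigr)\in GL_2(\F_{q^n})$, giving the $q$-polynomial identity
\[
\gamma y+\delta y^{q^m}=\alpha^{q^r}y^{q^r}+\beta^{q^r}y^{q^{m+r}}-a\alpha^{q^{-r}}y^{q^{-r}}-a\beta^{q^{-r}}y^{q^{m-r}}.
\]
This is the step I expect to be the main obstacle: a careful coefficient-matching case analysis on $m$ modulo $n$. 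The critical observation --- and exactly what fails for $n=3$ --- is that for $\gcd(r,n)=1$ and $n>3$ the residues $0,\pm r,\pm 2r$ are pairwise distinct modulo $n$. In each of the three subcases $m\notin\{r,n-r\}$, $m=r$, and $m=n-r$, the additional RHS exponents thus yield independent coefficient conditions which force $\alpha=\beta=0$, contradicting $\det M\neq 0$. (For $n=3$ the collision $2r\equiv -r\pmod 3$ collapses two of these conditions into one, leaving room for an actual equivalence with $L_{1,1}$, which is why the hypothesis $n>3$ is essential.)
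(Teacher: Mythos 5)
Your proof is correct, and for the scatteredness criterion and the count in $(ii)$ it takes a genuinely different route from the paper's. The paper never passes through the Grassmannian: it characterises a point of weight $>1$ by the existence of $\lambda\in\F_{q^n}\setminus\F_q$ with $A_{a,r}(\lambda x)=\lambda A_{a,r}(x)$, rewrites this as $x^{q^{2r}-1}=-a^{q^r}/(\lambda-\lambda^{q^r})^{q^r-1}$, reads off the criterion $N_q(a)=-1$ from the norm of the right-hand side (using $\gcd(2r,n)=1$), bounds the weight by $2$ by showing that any second witness $\mu$ must lie in $\langle 1,\lambda\rangle_{\F_q}$, and then counts weight-$2$ points by double counting the pairs $(\lambda,x)$: each admissible $\lambda$ gives $q-1$ solutions $x$, two witnesses give the same solution set iff they span the same plane with $1$, and this yields $q^{n-1}-1$ vectors, hence $(q^{n-1}-1)/(q^2-1)$ points. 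Your Moore-determinant bound on the weight and the Cramer identity $a=-D^{q^r-1}$ are cleaner and make the implication $N_q(a)\neq -1\Rightarrow$ scattered immediate, while your equivariant fibre count over $\mathrm{Gr}_2$ is more structural: it exhibits the number of weight-$2$ points as an equidistribution statement independent of $a$, at the price of the arithmetic lemma $(\F_{q^n}^*)^2\cdot\F_q^*=\F_{q^n}^*$, which indeed holds for $q,n$ odd as in the paper's setting. Two cosmetic remarks: the exponent in your equivariance is $q^{n-r}+1$, so the gcd you actually need is $\gcd(q^{n-r}+1,q^n-1)=2$ (same value by the same argument); and surjectivity of $\phi$ is a consequence of equivariance plus transitivity on the target together with nonemptiness of the domain, not a separate input. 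For the non-pseudoregulus assertion your argument coincides with the paper's: the same polynomial identity, the same three cases on $m$ (the paper phrases them as $m+r\equiv 0$, $m-r\equiv 0$, or neither), and the same conclusion that only the collision $2r\equiv-r\pmod n$, i.e.\ $n=3$, leaves room for a nontrivial solution.
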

\begin{proof}
$(i)$ Let $P_{ x}=\langle( x,A_{a,r}( x))\rangle$ be a point of $L_A$ and suppose that its weight is greater than 1. Then there exists $\lambda\in\F_{q^n}\setminus\F_q$ such that
$\lambda x= x'$ and $\lambda A_{a,r}( x)=A_{a,r}( x')$, for some $x'\in\F_{q^n}^*$. These conditions imply that $\lambda A_{a,r}( x)=A_{a,r}(\lambda x)$ and this is equivalent to the following equation:
\begin{equation}\label{formA-1}
a^{q^r}(\lambda^{q^r}-\lambda) x+(\lambda^{q^{2r}}-\lambda^{q^{r}}) x^{q^{2r}}=0.
\end{equation}
The previous equation is satisfied if and only if $ x$ is a solution of
\begin{equation}\label{form:star}
x^{q^{2r}-1}=-\frac {a^{q^r}}{(\lambda-\lambda^{q^r})^{q^r-1}},
\end{equation}
and, since $gcd(n,2r)=1$, this happens if and only if $N_q(a)=-1$.
This means that $L_A$ is a maximum scattered $\F_q$--linear set of
$PG(1,q^n)$ if and only if $N_q(a)\ne -1$.

Suppose that $N_q(a) \ne -1$. By Remark \ref{rem:examplenonpseudoregulustype}, $L_A$ is of pseudoregulus type if and only if $L_A$ is projectively equivalent to an $\F_q$--linear set of type (\ref{form:NormFormLine}), i.e. if and only if there exit $\alpha,\beta,\gamma,\delta\in\F_{q^n}$, with $\alpha\delta-\beta\gamma\ne 0$, such that
\begin{equation}
\left(
  \begin{array}{cc}
    \alpha & \beta \\
    \gamma & \delta \\
  \end{array}
\right)\left(
         \begin{array}{c}
           z \\
           z^{q^m} \\
         \end{array}
       \right)=\left(
                 \begin{array}{c}
                   x \\
                   A_{a,r}(x) \\
                 \end{array}
               \right)
\end{equation}
for each $x,z\in\F_{q^n}$ and for some $m\in\{1,\dots,n-1\}$ with $\gcd(m,n)=1$. The last equation implies that
\begin{equation}\label{formA-3}
\gamma z+\delta z^{q^m}=\alpha^{q^r}z^{q^r}+\beta^{q^r}z^{q^{m+r}}-a\alpha^{q^{-r}}z^{q^{-r}}-a\beta^{q^{-r}}z^{q^{m-r}},
\end{equation} for each $z\in\F_{q^n}$.
Since $n$ is odd and $gcd(n,r)=1$, then $m+r\not \equiv m-r (mod\,n)$. This means that, if $m+r\not\equiv 0 (mod\,n)$ and $m-r\not\equiv 0 (mod\,n)$, then $\beta=\gamma=\delta=0$, a contradiction. Hence, either $m+r\equiv 0 (mod\,n)$ or $m-r\equiv 0 (mod\,n)$.

If
\begin{equation}\label{formA-4}
m+r\equiv 0 (mod\,n),
\end{equation}
since $\alpha \gamma - \beta \delta \ne 0,$ from (\ref{formA-3}) we get
\begin{equation}\label{formA-5}
m-r\equiv r (mod\,n),
\end{equation}
otherwise $\alpha=\beta=0$. Since $gcd(n,r)=1$, Equations
(\ref{formA-4}) and (\ref{formA-5}) are both satisfied if and only
if $n=3$ and $\{m,r\}=\{1,2\}$.

If $m-r\equiv 0 (mod\,n)$, arguing as above we get that $n=3$ and $m=r\in\{1,2\}$. Hence, Statement $(i)$ is proved.

\bigskip

$(ii)$ Suppose, now, that $N_q(a)=-1$ and let $P_{x_0}=\langle (x_0,A_{a,r}(x_0))\rangle$, $x_0\in\F_{q^n}^*$, be a point of $L_A$ with weight greater than 1. Then there exists $\lambda_0\in\F_{q^n}\setminus\F_q$ such that
\begin{equation}\label{formA-6}
A_{a,r}(\lambda_0 x_0)=\lambda_0 A_{a,r}(x_0),
\end{equation} i.e., the pair $(x_0,\lambda_0)$ satisfies Equation (\ref{form:star}).
This means that
$$\{(\alpha x_0+\beta\lambda_0x_0,A_{a,r}(\alpha x_0+\beta\lambda_0x_0)): \alpha,\beta\in\F_q\}\subseteq \langle x_0\rangle\cap \{(x,A_{a,r}(x))\,:\, x\in\F_{q^n}\},$$
and hence $P_{x_0}$ has weight at least 2 in $L_A$.

If $P_{x_0}$ had weight greater than 2, then there would be
$\mu\in\F_{q^n}$ and $\mu \notin \langle 1,\lambda_0 \rangle_{\F_q}$
such that
\begin{equation}\label{formA-7}
A_{a,r}(\mu x_0)=\mu A_{a,r}(x_0).
\end{equation}

From Equation (\ref{form:star}), we get
$\frac{\mu^{q^r}-\mu}{\lambda_0^{q^r}-\lambda_0}\in\F_q^*$, and this
happens if and only if $\mu\in\langle 1,\lambda_0\rangle_{\F_q}$, a
contradiction. Hence
$$\langle x_0\rangle\cap \{(x,A_{a,r}(x))\,:\, x\in\F_{q^n}\}=\{(\alpha x_0+\beta\lambda_0x_0,A_{a,r}(x_0+\alpha\lambda_0x_0)):\alpha,\beta\in\F_q\},$$ i.e. $P_{x_0}$ has weight 2 in $L_A$.

It follows that the number of pairs $(\lambda ,x)$, with $\lambda
\in {\mathbb{F}}_{q^n}\setminus{\mathbb{F}}_q$, corresponding to a
point $\langle (x, A_{a,r}(x))\rangle$ of weight 2 is
$(q^n-q)(q-1)$, since for a given $\lambda  \in
{\mathbb{F}}_{q^n}\setminus{\mathbb{F}}_q$, Equation
(\ref{form:star}) admits $q-1$ solutions $x \in {\mathbb{F}}_{q^n}$.
Since two equations $A_{a,r}(\lambda x)=\lambda A_{a,r}(x)$ and
$A_{a,r}(\mu x)=\mu A_{a,r}(x)$ have the same solution $x \in
{\mathbb{F}}_{q^n}$ if and only if $\mu \in \langle 1, \lambda
\rangle_{{\mathbb{F}}_q}$, and in such a case they admits the same
set of solutions, we have exactly
$$
\frac{(q^n-q)(q-1)}{q^2-q}=q^{n-1}-1
$$
values $x\in \mathbb{F}_{q^n}$ that define a point $\langle (x,
A_{a,r}(x))\rangle$ of weight 2, and hence
$$
q^n-q^{n-1}
$$
values  $x\in \mathbb{F}_{q^n}$ defining a point of weight 1. So,
there are
$$
\frac{(q^{n-1}-1)}{q^2-1}
$$
points of weight 2 and $\frac{q^n-q^{n-1}}{q-1}=q^{n-1}$ points of
weight 1. Since $L_A$ has no points of weight greater than 2, we
have
$$
|L_A|=q^{n-1}+\frac{(q^{n-1}-1)}{q^2-1}.
$$
\end{proof}

\begin{theorem}  \label{thm:linearsetCaseA}
The $\F_q$-linear set $L_{\cD_A}$ has the following structure:
\begin{itemize}
\item [$i)$] If $N_q(a) \neq -1$ and $n=3$, then $L_{\cD_A}$ is a maximum
scattered $\F_q$-linear set of $PG(3,q^3)$ of pseudoregulus type.
\item [$ii)$] If $N_q(a) \neq -1$ and $n>3$, then $L_{\cD_A}$ is a maximum
scattered $\F_q$-linear set of $PG(3,q^n)$ with two long lines not
of pseudoregulus type.
\item [$iii)$] If $N_q(a)= -1$, then the points of $L_{\cD_A}$ of weight bigger
than 1 have weight $2$ and they are $\frac{q^{n-1}-1}{q-1}$ in
number. Hence, $|L_{\cD_A}|=\frac{q^{2n}-q^n}{q-1}+1$.
\end{itemize}
\end{theorem}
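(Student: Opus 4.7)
The plan is to reduce the weight analysis of $L_{\cD_A}$ in $PG(3,q^n)$ to the rank-$n$ linear set $L_A$ on $PG(1,q^n)$ that Lemma \ref{lemma:LineCASEA} handles. For a point $P=\langle(x_0,y_0,A_{a,r}(y_0),\xi A_{a,r}(x_0))\rangle$ of $L_{\cD_A}$, its weight equals $\dim_{\F_q}(S_{x_0}\cap S_{y_0})$, where
\[
S_{z}=\{\lambda\in\F_{q^n}\,:\,A_{a,r}(\lambda z)=\lambda A_{a,r}(z)\}
\]
is the space controlling the weight of $\langle(z,A_{a,r}(z))\rangle$ in $L_A$ (with the convention $S_{0}=\F_{q^n}$). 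Since $N_q(a)\ne 1$ is standing, $A_{a,r}$ is bijective and the lines $r_1: X_1=X_2=0$, $r_2: X_0=X_3=0$ are both long lines of $L_{\cD_A}$; along them the weight of $P$ equals that of the associated point of $L_A$.

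For $(i)$ and $(ii)$, assume $N_q(a)\ne -1$. By Lemma \ref{lemma:LineCASEA}$(i)$ each $S_z$ equals $\F_q$ for $z\ne 0$, so every point of $L_{\cD_A}$ has weight $1$ and $L_{\cD_A}$ is maximum scattered. When $n>3$, Lemma \ref{lemma:LineCASEA}$(i)$ directly gives that $L_A$, and hence $r_1$ and $r_2$, are not of pseudoregulus type, proving $(ii)$. When $n=3$, the proof of that lemma shows $L_A$ is necessarily of pseudoregulus type; Theorem \ref{thm:ScatGenDi} then applies, and because the only possibilities $s,t\in\{1,2\}$ force $s=t$ or $s+t=3$, $L_{\cD_A}$ itself is of pseudoregulus type, proving $(i)$.

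For $(iii)$, assume $N_q(a)=-1$. By Lemma \ref{lemma:LineCASEA}$(ii)$ the weights on $r_1$ and $r_2$ are $1$ or $2$, with exactly $(q^{n-1}-1)/(q^2-1)$ weight-$2$ points on each line. For an off-axis point ($x_0,y_0\ne 0$), the key identity $S_{cz}=S_z$ for every $c\in\F_q^*$, together with the explicit relation (\ref{form:star}), gives that $S_{x_0}\cap S_{y_0}$ strictly contains $\F_q$ if and only if both $S_{x_0}$ and $S_{y_0}$ are two-dimensional and $(y_0/x_0)^{q^{2r}-1}=1$, i.e.\ $y_0/x_0\in\F_q^*$ (using $\gcd(q^{2r}-1,q^n-1)=q-1$). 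Thus the off-axis weight-$2$ points correspond to pairs $(x_0,c)$ with $x_0$ representing a weight-$2$ point of $L_A$ and $c\in\F_q^*$.

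A direct enumeration---with the equivalence $(x_0,c)\sim(\mu x_0,c)$ for $\mu\in S_{x_0}^*$ yielding classes of size $q^2-1$---gives $(q^{n-1}-1)(q-1)/(q^2-1)=(q^{n-1}-1)/(q+1)$ off-axis weight-$2$ points. Adding the $r_1$, $r_2$ contributions produces $x_2=(q^{n-1}-1)/(q-1)$, and then (\ref{form1})--(\ref{form2}) yield $|L_{\cD_A}|=(q^{2n}-q^n)/(q-1)+1$. The principal technical obstacle is the off-axis bookkeeping in $(iii)$: one must verify that the \emph{only} off-axis weight-$2$ points come from $y_0\in\F_q^*\,x_0$, which relies on a careful analysis of when the two-dimensional $\F_q$-spaces $S_{x_0}$ and $S_{y_0}$ coincide.
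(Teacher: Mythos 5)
Your reduction to the line lemma via the spaces $S_z$ (the weight of $\langle(x_0,y_0,A_{a,r}(y_0),\xi A_{a,r}(x_0))\rangle$ being $\dim_{\F_q}(S_{x_0}\cap S_{y_0})$), the observation that a common $\lambda\notin\F_q$ forces $N_q(a)=-1$ and, via (\ref{form:star}), $y_0\in\F_q^*x_0$, and the resulting count $x_2=(q^{n-1}-1)/(q-1)$ followed by (\ref{form1})--(\ref{form2}) are exactly the paper's argument for parts $(ii)$ and $(iii)$; the paper merely organizes the count as $q+1$ weight-two points on each of the $\frac{q^{n-1}-1}{q^2-1}$ lines $\ell_{x_0}=\langle P_{x_0,0},P_{0,x_0}\rangle$ instead of splitting into on-axis and off-axis points, and the two tallies agree.

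The one step that does not hold up as written is in case $(i)$. You assert that for $n=3$ ``the proof of that lemma shows $L_A$ is necessarily of pseudoregulus type.'' The proof of Lemma \ref{lemma:LineCASEA}$(i)$ establishes only the converse implication: \emph{if} $L_A$ is of pseudoregulus type, \emph{then} $n=3$. It never exhibits, for $n=3$, an invertible matrix $\left(\begin{smallmatrix}\alpha&\beta\\ \gamma&\delta\end{smallmatrix}\right)$ realizing the equivalence with $L_{1,m}$, so it does not prove that $L_A$ is of pseudoregulus type in that case, and your subsequent appeal to Theorem \ref{thm:ScatGenDi} (which needs both $r_1$ and $r_2$ to be lines of pseudoregulus type) is therefore unsupported as stated. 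The missing input is the known classification fact that every maximum scattered $\F_q$-linear set of rank $3$ of $PG(1,q^3)$ -- equivalently, of rank $6$ of $PG(3,q^3)$ -- is of pseudoregulus type; the paper sidesteps the whole issue by citing \cite[Prop.~2.8]{MPT} directly for $PG(3,q^3)$, without routing through Theorem \ref{thm:ScatGenDi}. With that citation inserted your argument for $(i)$ also closes, but the justification you give is logically reversed.
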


\begin{proof}
We explicitly note that $L_{\cD_A}$ is contained in the lines
joining a point of $L_{\cD_A}\cap r_1$ with a point of
$L_{\cD_A}\cap r_1^\perp$, where
$$L_{\cD_A}\cap r_1=\{\langle(x,0,0,\xi A_{a,r}(x))\rangle\,:\,
x\in\F_{q^n}^*\}$$ and $$L_{\cD_A}\cap
r_1^\perp=\{\langle(0,y,A_{a,r}(y),0)\rangle\,:\, y\in\F_{q^n}^*\}$$
and both the linear sets $L_{\cD_A}\cap r_1$ and $L_{\cD_A}\cap
r_1^\perp$ are of type described in Lemma \ref{lemma:LineCASEA}.

Let $P_{x,y}=\langle( x, y,A_{a,r}( y),\xi A_{a,r}( x))\rangle$ be a
point of $L_{\cD_A}\setminus (r_1\cup r_1^\perp)$ and suppose that
its weight is greater than 1. Then there exists
$\lambda\in\F_{q^n}\setminus \F_q$ such that $\lambda A_{a,r}(
x)=A_{a,r}(\lambda x)$ and $\lambda A_{a,r}( y)=A_{a,r}(\lambda y)$.
Arguing as in the proof of Lemma \ref{lemma:LineCASEA}, this happens
if and only if $N_q(a)=-1$ and, in such a case, $ x$ and $ y$ are
solutions of the same equation $A_{a,r}(\lambda x)=\lambda
A_{a,r}(x)$, hence $ y=\rho  x$, where $\rho\in\F_q^*$, and $P_{ x,
y}$ has weight 2 in $L_{\cD_A}$. This means that, if $N_q(a)\ne -1$,
then $L_{\cD_A}$ is a maximum scattered $\F_q$--linear set of
$PG(3,q^n)$ and by \cite[Prop. 2.8]{MPT} and by part $(i)$ of Lemma
\ref{lemma:LineCASEA}, the assertions $i)$ and $ii)$ follow. On the
other hand, if $N_q(a)=-1$ and $P\notin r\cup r^\perp$ is a point of
weight 2 in $L_{\cD_A}$, then $P$ belongs to the line
$\ell_{x_0}:=\langle P_{x_0,0},P_{0,x_0}\rangle$, where
$x_0\in\F_{q^n}^*$ such that $A_{a,r}(\lambda x_0)=\lambda
A_{a,r}(x_0)$ for some $\lambda\in\F_{q^n}\setminus\F_q$ and
$P=P_{x_0,\rho x_0}$ with $\rho\in\F_q^*$. Then there are $q+1$
points on the line $\ell_{x_0}$ of weight 2 in $L_{\cD_A}$
($P_{x_0,0}$ and $P_{0,x_0}$ included). From $ii)$ of Lemma
\ref{lemma:LineCASEA} there are $(q+1)\frac{q^{n-1}-1}{q^2-1}$
points of weight 2 in $L_{\cD_A}$. Using Equations (\ref{form1}) and
(\ref{form2}), the last part of statement $iii)$ follows.
\end{proof}

\subsection*{$\cD_B$ presemifields}

In this Section we will describe the linear set associated with
presemifields $\cD_B$, precisely:
$$L_{\cD_B}=\{\langle(x,y,B_{b,r}(x),\xi B_{b,r}(x))\rangle\, : \, x,y \in \F_{q^n}, (x,y) \neq (0,0)\} \subset PG(3,q^n),$$
where $B_{b,r}(x)=2H_{b,r}^{-1}(x)-x$ with $H_{b,r}(x)=x-bx^{q^r}$, $gcd(n,r)=1$ and $b\in\F_{q^n}^*$ such that $N_q(b)\ne \pm 1$.\\

Start with the following lemma.
\begin{lemma} \label{lemma:LineCASEB}
Let $L_B$ be the following $\F_q$-linear set of rank $n$ of
the line $PG(1,q^n)$\\
$$L_B=\{\langle(x,B_{b,r}(x))\rangle\,:\, x\in\F_{q^n}^*\}
$$
where  $B_{b,r}(x)=2H_{b,r}^{-1}(x)-x$ such that
$H_{b,r}(x)=x-bx^{q^r}$, $gcd(n,r)=1$ and $N_q(b)\ne \pm 1$. Then
 $L_B$ is a maximum scattered $\F_q$--linear set of pseudoregulus type.\\
\end{lemma}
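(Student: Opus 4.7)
The plan is to rewrite $L_B$ in the explicit normal form
$W_{\rho,\tau}=\{\lambda\mathbf{w}+\rho\lambda^{\tau}\mathbf{v}:\lambda\in\F_{q^n}\}$
for pseudoregulus--type linear sets on $PG(1,q^n)$ introduced in the paragraph immediately preceding the lemma; by the definition given there, exhibiting such a parametrisation (with an automorphism $\tau$ having $\mathrm{Fix}(\tau)=\F_q$) is by itself enough to conclude that $L_B$ is a maximum scattered $\F_q$--linear set of pseudoregulus type.

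The first step is to verify that $H_{b,r}$ is bijective on $\F_{q^n}$, so that $B_{b,r}$ is well defined. If $z-bz^{q^r}=0$ for some $z\ne 0$, then $b=z^{1-q^r}$, which yields
\[
N_q(b)=N_q(z)/N_q(z^{q^r})=1,
\]
contradicting $N_q(b)\ne 1$. Hence the reparametrisation $z:=H_{b,r}^{-1}(x)$ is a bijection of $\F_{q^n}^*$ onto itself, and under it $x=z-bz^{q^r}$ together with $B_{b,r}(x)=2z-x=z+bz^{q^r}$. As $z$ ranges over $\F_{q^n}^*$ we then obtain
\[
L_B=\{\langle (z-bz^{q^r},\,z+bz^{q^r})\rangle:z\in\F_{q^n}^*\}
=\{\langle z(1,1)+bz^{q^r}(-1,1)\rangle:z\in\F_{q^n}^*\}.
\]

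The second step is the pattern--match to the normal form. Set $\mathbf{w}=(1,1)$, $\mathbf{v}=(-1,1)$, $\rho=b$ and $\tau:\lambda\mapsto\lambda^{q^r}$. Since $q$ is odd, $(1,1)$ and $(-1,1)$ are linearly independent over $\F_{q^n}$, so $\langle\mathbf{w}\rangle$ and $\langle\mathbf{v}\rangle$ are distinct points of $PG(1,q^n)$; since $\gcd(r,n)=1$, the $\F_q$--automorphism $\tau$ of $\F_{q^n}$ has fixed field exactly $\F_q$. Therefore $L_B$ coincides with $L_{\rho,\tau}$ in the notation introduced before the lemma, so by the definition recalled there $L_B$ is a maximum scattered $\F_q$--linear set of pseudoregulus type, with transversal points $\langle(1,1)\rangle$ and $\langle(-1,1)\rangle$.

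The only subtle point is the bijectivity of $H_{b,r}$, which the hypothesis $N_q(b)\ne 1$ handles at once; the rest is a direct substitution followed by recognising the normal form. I note that the stronger assumption $N_q(b)\ne -1$ is not actually needed in this lemma: it is presumably imposed in the Dempwolff construction to ensure that $\bS(B_{b,r},B_{b,r})$ is a proper presemifield, and will enter in later arguments rather than here.
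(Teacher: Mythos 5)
Your proof is correct and follows essentially the same route as the paper: both reparametrise via the bijection $H_{b,r}$ to rewrite $L_B$ as $\{\langle(x-bx^{q^r},x+bx^{q^r})\rangle : x\in\F_{q^n}^*\}$ and then recognise this as a linear set of pseudoregulus type on the line (the paper via projective equivalence to $\{\langle(x,x^{q^r})\rangle\}$ and Remark \ref{rem:examplenonpseudoregulustype}, you by matching the $W_{\rho,\tau}$ normal form directly, which is a negligible difference). Your side observation that only $N_q(b)\ne 1$ is used here is also accurate.
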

\begin{proof}
Since $H_{b,r}(x)$ defines an invertible $\F_q$--linear map of
$\F_{q^n}$, the linear set $L_B$ can be rewritten as
$$L_B=\{\langle(H_{b,r}(x),H_{b,r}(B_{b,r}(x)))\rangle\,:\,
x\in\F_{q^n}^*\}=\{\langle(x-bx^{q^r},x+bx^{q^r})\rangle \,:\,
x\in\F_{q^n}^*\}.$$ Since $q$ is odd, arguing as in the proof of
Lemma \ref{lemma:LineCASEA}, straightforward computations show that
$L_B$ is a maximum scattered $\F_q$-linear set of $PG(1,q^n)$ and it
is projectively equivalent to the linear set $\{\langle (x,
x^{q^r})\rangle: x\in\F_{q^n}^*\}$. By Remark
\ref{rem:examplenonpseudoregulustype}, the assertion follows.
\end{proof}

\begin{theorem}  \label{thm:linearsetCaseB}
The $\F_q$-linear set $L_{\cD_B}$ is a maximum scattered
$\F_q$-linear set of pseudoregulus type whose transversal lines are
external to the quadric $\cQ$ and pairwise polar w.r.t. the polarity
defined by $\cQ$.
\end{theorem}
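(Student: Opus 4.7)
The plan is to exhibit an explicit projective equivalence bringing $L_{\cD_B}$ to a standard pseudoregulus form (this automatically gives the maximum scattered property by definition) and then to pull back the transversal lines provided by Theorem~\ref{thm:algebraicpseudoregulus} to check the claimed properties with respect to $\cQ$.

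Since $H_{b,r}(z) = z - bz^{q^r}$ is a bijection of $\F_{q^n}$, the linear set $L_{\cD_B}$ can be reparameterized through $x = H_{b,r}(u)$ and $y = H_{b,r}(v)$; together with the identity $B_{b,r}(H_{b,r}(z)) = z + bz^{q^r}$ that drives the proof of Lemma~\ref{lemma:LineCASEB}, this gives
\[
L_{\cD_B} = \{\langle(u - bu^{q^r},\, v - bv^{q^r},\, v + bv^{q^r},\, \xi u + \xi b u^{q^r})\rangle\}.
\]
The invertible linear change of coordinates $X'_0 = (X_0 + X_3/\xi)/2$, $X'_1 = (X_1 + X_2)/2$, $X'_2 = (X_1 - X_2)/2$, $X'_3 = (X_0 - X_3/\xi)/2$ then decouples coordinates $0/3$ and $1/2$ and reduces $L_{\cD_B}$ to $L' = \{\langle (u, v, -bv^{q^r}, -bu^{q^r})\rangle : (u,v) \neq (0,0)\}$. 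The map $f : (u, v, 0, 0) \mapsto (0, 0, -bv^{q^r}, -bu^{q^r})$ is semilinear with companion $\sigma : \lambda \mapsto \lambda^{q^r}$, and $\gcd(r,n) = 1$ ensures $\mathrm{Fix}(\sigma) = \F_q$; so Theorem~\ref{thm:algebraicpseudoregulus} (with $\rho = 1$) applies and $L'$ is of pseudoregulus type, with transversal lines $U_1 : X'_2 = X'_3 = 0$ and $U_2 : X'_0 = X'_1 = 0$.

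Pulling these transversals back to the original coordinates yields $T_1 : X_1 = X_2,\ \xi X_0 = X_3$ and $T_2 : X_1 + X_2 = 0,\ \xi X_0 + X_3 = 0$. A generic point of $T_1$ has the form $(t, s, s, \xi t)$, so $X_0 X_3 - X_1 X_2 = \xi t^2 - s^2$. Since $n$ is odd, $\F_{q^2} \cap \F_{q^n} = \F_q$, which forces any square root of $\xi$ in $\F_{q^n}$ to lie in $\F_q$; thus $\xi$ remains a non-square in $\F_{q^n}$ and $\xi t^2 = s^2$ has no non-trivial solutions, so $T_1$ is external to $\cQ$. The same holds for $T_2$ by symmetry. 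For polarity with respect to the bilinear form $b(X, Y) = X_3 Y_0 - X_2 Y_1 - X_1 Y_2 + X_0 Y_3$ associated with $\cQ$, the spanning points $(1, 0, 0, \xi)$ and $(0, 1, 1, 0)$ of $T_1$ have polars $\{\xi Y_0 + Y_3 = 0\}$ and $\{Y_1 + Y_2 = 0\}$ respectively, whose intersection is exactly $T_2$; hence $T_2 = T_1^\perp$.

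The main technical step is the construction of the single $4\times 4$ coordinate change that simultaneously rectifies both long lines of $L_{\cD_B}$ into standard pseudoregulus form. That such a change exists is precisely what places us in the ``$s=t$'' case of Theorem~\ref{thm:ScatGenDi}, and it reflects the fact that the same map $B_{b,r}$ occurs on both coordinate pairs of $L_{\cD_B}$.
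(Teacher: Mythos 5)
Your proposal is correct and follows essentially the same route as the paper: both use the reparameterization via $H_{b,r}$ and the identity $B_{b,r}(H_{b,r}(z))=z+bz^{q^r}$, both invoke Theorem~\ref{thm:algebraicpseudoregulus} with a strictly semilinear map of companion $\lambda\mapsto\lambda^{q^r}$, and both arrive at the same transversal lines $X_1=X_2,\ \xi X_0=X_3$ and $X_1+X_2=0,\ \xi X_0+X_3=0$, checked to be external to $\cQ$ and mutually polar using that $\xi$ stays a non-square in $\F_{q^n}$ for $n$ odd. The only cosmetic difference is that you insert an explicit decoupling change of coordinates before applying Theorem~\ref{thm:algebraicpseudoregulus}, whereas the paper exhibits the transversals and the semilinear map directly in the original coordinates.
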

\begin{proof}
As in the proof of Lemma \ref{lemma:LineCASEB}, we explicitly note
that $L_{\cD_B}$ can be rewritten as
$$L_{\cD_B}=\{\langle(x-bx^{q^r},y-by^{q^r},y+by^{q^r},\xi
(x+bx^{q^r}))\rangle\, : \, x,y \in \F_{q^n}, (x,y) \neq (0,0)\}.$$
Hence, $L_{\cD_B}$ is contained in the lines joining a point of
$L_{\cD_B}\cap r_1$ with a point of $L_{\cD_B}\cap r_1^\perp$, where
$$L_{\cD_B}\cap r_1=\{\langle(x-bx^{q^r},0,0,\xi
(x+bx^{q^r})\rangle\,:\, x\in\F_{q^n}^*\}$$ and $$L_{\cD_B}\cap
r_1^\perp=\{\langle(0,y-by^{q^r},y+by^{q^r},0)\rangle\,:\,
y\in\F_{q^n}^*\}$$ and both linear sets $L_{\cD_B}\cap r_1$ and
$L_{\cD_B}\cap r_1^\perp$ are of type described in Lemma
\ref{lemma:LineCASEB}.

Arguing as in the proof of Theorem \ref{thm:linearsetCaseA}, direct
computations show that $L_{\cD_B}$ is a maximum scattered
$\F_q$-linear set of $\Lambda = PG(3,q^n)$. Now, consider the lines
$$t_1:=\{\langle(x,y,y,\xi x)\rangle: \,\, x,y \in\F_{q^n}, (x,y)\neq
(0,0)\}$$ and $$t_2:=\{\langle(-x,-y,y,\xi x)\rangle: \,\, x,y
\in\F_{q^n}, (x,y)\neq (0,0)\}$$ of $\Lambda$.  Since $q$ is odd and
$\xi$ is a non--square in $\F_q^*$, these lines are disjoint,
external to the quadric $\cQ$ and pairwise polar with respect to the
polarity defined by $\cQ$. Consider the collineation $\Phi_f$
between $t_1$ and $t_2$ defined by the following strictly semilinear
map  $$f:  (x,y,y,\xi x) \mapsto  (-bx^{q^r},-by^{q^r},by^{q^r},\xi
bx^{q^r}), $$ where $\xi$ and $b$ are as in the definition of the
$\F_q$-linear set $L_{\cD_B}.$ Then we have $$L_{\cD_B} = \{\langle
(x,y,y,\xi x)+f((x,y,y,\xi x)) \rangle \,:\, x,y \in \F_{q^n}
\,\text{and}\, (x,y)\neq (0,0)\}.$$ Hence, by Theorem
\ref{thm:algebraicpseudoregulus}, $L_{\cD_B}$ is an $\F_q$-linear
set of $\Lambda$ of pseudoregulus type and the transversal lines of
the associated pseudoregulus are $t_1$ and $t_2$. This completes the
proof.
 \end{proof}

\subsection*{$\cD_{AB}$ presemifields}

In this section we will describe the linear set associated with the
presemifields $\cD_{AB}$, precisely:
$$L_{\cD_{AB}}=\{\langle(x,y,A_{b^2,r}(y),\xi B_{b,-r}(x))\rangle\, : \, x,y \in \F_{q^n}, (x,y) \neq (0,0)\} \subset PG(3,q^n),$$
where $A_{b^2,r}(x)=x^{q^r}-b^2x^{q^{-r}}$ and $B_{b,-r}(x)=2H_{b,-r}^{-1}(x)-x$, with $H_{b,-r}(x)=x-bx^{q^{n-r}}$, $gcd(n,r)=1$ and $b\in\F_{q^n}^*$ such that $N_q(b)\ne \pm 1$.\\

\begin{theorem}  \label{thm:linearsetCaseAB}
The $\F_q$-linear set $L_{\cD_{AB}}$ has the following structure:

\begin{itemize}
\item [$i)$] If $N_q(b^2) \neq -1$ and $n=3$, then $L_{\cD_{AB}}$ is a maximum
scattered $\F_q$-linear set of $PG(3,q^3)$ of pseudoregulus type.
\item [$ii)$] If $N_q(b^2) \neq -1$ and $n>3$, then $L_{\cD_{AB}}$ is a maximum
scattered $\F_q$-linear set of $PG(3,q^n)$ with two long lines one
of pseudoregulus type and the other one not of
pseudoregulus type.
\item [$iii)$] If $N_q(b^2)= -1$, then the points of $L_{\cD_{AB}}$ of weight
greater than one have weight $2$, they are $(q^{n-1}-1)/(q^2-1)$ in number. Hence $|L_{\cD_{AB}}|=\frac{q^{2n+1}+q^{2n}-q^{n}-1}{q^2-1}$.
\end{itemize}
\end{theorem}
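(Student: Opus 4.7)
The plan is to mirror the proofs of Theorems \ref{thm:linearsetCaseA} and \ref{thm:linearsetCaseB}, exploiting the hybrid nature of $L_{\cD_{AB}}$. First, I would observe that $L_{\cD_{AB}}$ is contained in the union of lines joining a point of
\[
L_{\cD_{AB}}\cap r_1=\{\langle(x,0,0,\xi B_{b,-r}(x))\rangle:x\in\F_{q^n}^*\}
\]
with a point of
\[
L_{\cD_{AB}}\cap r_1^{\perp}=\{\langle(0,y,A_{b^2,r}(y),0)\rangle:y\in\F_{q^n}^*\}.
\]
By Lemma \ref{lemma:LineCASEB} the first intersection is a maximum scattered $\F_q$-linear set of $r_1$ of pseudoregulus type (independently of $N_q(b^2)$), while Lemma \ref{lemma:LineCASEA}, applied with parameter $b^2$, governs the second.

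Next I would analyse points $P_{x,y}=\langle(x,y,A_{b^2,r}(y),\xi B_{b,-r}(x))\rangle$ off both transversal lines, i.e.\ with $x,y\in\F_{q^n}^*$. Such a point has weight $\geq 2$ in $L_{\cD_{AB}}$ if and only if there exists $\lambda\in\F_{q^n}\setminus\F_q$ satisfying simultaneously $A_{b^2,r}(\lambda y)=\lambda A_{b^2,r}(y)$ and $B_{b,-r}(\lambda x)=\lambda B_{b,-r}(x)$. The second equation admits no such $\lambda$, thanks to the unconditional scatteredness of $L_B$ provided by Lemma \ref{lemma:LineCASEB}. Consequently every point of weight greater than one lies on $r_1\cup r_1^{\perp}$, reducing the entire geometry of $L_{\cD_{AB}}$ to that of its two transversal sections.

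The three cases then split according to $N_q(b^2)$. In case (i), with $n=3$ and $N_q(b^2)\neq -1$, both transversal sections are of pseudoregulus type by Lemmas \ref{lemma:LineCASEB} and \ref{lemma:LineCASEA}(i), so Theorem \ref{thm:ScatGenDi} forces $L_{\cD_{AB}}$ itself to be of pseudoregulus type. In case (ii), with $n>3$ and $N_q(b^2)\neq -1$, $L_{\cD_{AB}}$ is maximum scattered by the reduction above, $r_1$ is a long line of pseudoregulus type, and $r_1^{\perp}$ is a long line which is \emph{not} of pseudoregulus type by Lemma \ref{lemma:LineCASEA}(i). In case (iii), with $N_q(b^2)=-1$, the line $r_1$ contributes no weight-$>1$ points, while by Lemma \ref{lemma:LineCASEA}(ii) the intersection on $r_1^{\perp}$ carries exactly $(q^{n-1}-1)/(q^2-1)$ points of weight $2$ and no higher-weight points; these are therefore all the weight-$>1$ points of $L_{\cD_{AB}}$. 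Substituting $x_2=(q^{n-1}-1)/(q^2-1)$ into identities (\ref{form1}) and (\ref{form2}) then yields the claimed size $|L_{\cD_{AB}}|=(q^{2n+1}+q^{2n}-q^n-1)/(q^2-1)$.

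I expect the main delicacy to lie in case (ii): the asymmetry between the two long lines prevents a direct appeal to Theorem \ref{thm:ScatGenDi}, which presumes both long lines to be of pseudoregulus type. The cleanest remedy is to follow the model of the proof of Theorem \ref{thm:linearsetCaseA}, invoking \cite[Prop.~2.8]{MPT} together with Lemma \ref{lemma:LineCASEA}(i) to confirm both that $L_{\cD_{AB}}$ is maximum scattered and that $r_1$ and $r_1^{\perp}$ are its only long lines. The remaining verifications are routine applications of the identifications via Lemmas \ref{lemma:LineCASEA} and \ref{lemma:LineCASEB} and the counting formulas (\ref{form1})--(\ref{form2}).
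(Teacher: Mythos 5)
Your decomposition and the reduction of all weight computations to the two transversal sections is exactly the paper's argument: the paper compresses the key step --- that $B_{b,-r}(\lambda x)=\lambda B_{b,-r}(x)$ has no solution with $\lambda\in\F_{q^n}\setminus\F_q$ and $x\neq 0$, because $L_B$ is unconditionally scattered, so every point of weight greater than one lies on $r_1^{\perp}$ --- into ``arguing as in the proof of Theorem \ref{thm:linearsetCaseA}'', and you simply make it explicit. Cases (ii) and (iii) are handled correctly and as in the paper; your substitution of $x_2=(q^{n-1}-1)/(q^2-1)$ into (\ref{form1})--(\ref{form2}) does yield $(q^{2n+1}+q^{2n}-q^n-1)/(q^2-1)$.

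The one genuine slip is in case (i), and you have mislocated the delicacy. For $n=3$ you invoke Theorem \ref{thm:ScatGenDi}, which requires \emph{two} lines of pseudoregulus type; but Lemma \ref{lemma:LineCASEA}(i) only asserts that $L_A$ is \emph{not} of pseudoregulus type when $n>3$ --- it says nothing affirmative for $n=3$ --- so you cannot quote it to conclude that the section on $r_1^{\perp}$ is of pseudoregulus type, and Theorem \ref{thm:ScatGenDi} is not directly applicable. The paper sidesteps this entirely by citing \cite[Prop. 2.8]{MPT}, which gives that every maximum scattered $\F_q$-linear set of $PG(3,q^3)$ is of pseudoregulus type; once your weight analysis shows $L_{\cD_{AB}}$ is scattered, case (i) is immediate from that. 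Conversely, case (ii), which you flag as the main difficulty, needs nothing beyond what you already have: maximum scatteredness from the weight analysis, $r_1$ of pseudoregulus type by Lemma \ref{lemma:LineCASEB}, and $r_1^{\perp}$ not of pseudoregulus type by Lemma \ref{lemma:LineCASEA}(i) with $n>3$. Neither Theorem \ref{thm:ScatGenDi} nor \cite[Prop. 2.8]{MPT} (which concerns $PG(3,q^3)$ only and is of no use for $n>3$) is needed there, and the statement of (ii) does not ask you to show that $r_1$ and $r_1^{\perp}$ are the only long lines.
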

\begin{proof}
Observe that $L_{\cD_{AB}}$ is contained in the lines joining a
point of $L_{\cD_{AB}}\cap r_1$ with a point of $L_{\cD_{AB}}\cap
r_1^\perp$, where
$$L_{\cD_{AB}}\cap r_1=\{\langle(x,0,0,\xi B_{b,-r}(x))\rangle\,:\,
x\in\F_{q^n}^*\} \,\,\,\text{and}\,\,\, L_{\cD_{AB}}\cap
r_1^\perp=\{\langle(0,x,A_{b^2,r}(x),0)\rangle\,:\,
x\in\F_{q^n}^*\}.$$ The linear sets $L_{\cD_{AB}}\cap r_1$ and
$L_{\cD_{AB}}\cap r_1^\perp$ satisfy the properties described in
Lemmas \ref{lemma:LineCASEB} and \ref{lemma:LineCASEA},
respectively. By these two lemmas and by Theorems
\ref{thm:linearsetCaseA} and \ref{thm:linearsetCaseB}, arguing as in
the proof of Theorem \ref{thm:linearsetCaseA}, can be easily checked
that a point of $L_{\cD_{AB}}$ of weight greater than 1 exists if
and only if $N_q(b^2)=-1$, and in such a case it belongs to the line
$r_1^\perp$ and it has weight 2. So, in Case $i)$, $L_{\cD_{AB}}$ is
a maximum scattered $\F_q$-linear set of $PG(3,q^3)$ and, hence, by
\cite[Proposition 2.8]{MPT}, it is of pseudoregulus type. Point
$ii)$ of the statement also follows from Point $i)$ of Lemma
\ref{lemma:LineCASEA}, from Lemma \ref{lemma:LineCASEB} and from
Theorems \ref{thm:linearsetCaseA} and \ref{thm:linearsetCaseB}.
Finally, Point $iii)$ may be easily achieved taking into account
Point $ii)$ of Lemma \ref{lemma:LineCASEA} and arguing as in the
proof of Point $iii)$ of Theorem \ref{thm:linearsetCaseA}.
\end{proof}

\subsection*{Proof of Theorem \ref{thm:main}}

Since each presemifield $\cD_{A}$, $\cD_{B}$ and $\cD_{AB}$ has
dimension $2n$ ($n$ odd) over its center, none of these
presemifields is isotopic to a ${\mathcal {EMPT}2}$ semifield. By
looking at the geometric structure of their associated linear sets,
one also get that none of the three relevant presemifields is
isotopic to a $\mathcal{GD}$ semifield with either $s$ or $t$ equal
zero, to a $\cal{TP}$ semifield or to a $\cal{TP}^{\perp}$
semifield.

{\bf CASE $\cD_A$}.\quad Comparing the nuclei of the presemifields $\cD_A$
(see Remark \ref{rem:nucleiDemp}) we get that none of these presemifields is
isotopic to a $\cK_{17}$, $\cK_{19}$, $\cGD$, ${\mathcal {JMPT}}$ and ${\mathcal {EMPT}1}$ semifield.

It remains to compare $\cD_A$ with a rank 2 Generalized Twisted
Field $\cA$. By \cite[Prop. 5.3]{LuMaPoTr-Sub} the $\F_q$--linear set associated with a Generalized Twisted Field is of pseudoregulus type. If $N_q(a)= -1$, by Theorem \ref{thm:linearsetCaseA},
the associated linear set $L_{\cD_A}$ is not scattered, whereas if
$N_q(a) \neq  -1$ and $n>3$, $L_{\cD_A}$ is a maximum
scattered linear set with two long lines not of pseudoregulus type,
hence, by Remark \ref{rem:examplenonpseudoregulustype}, $L_{\cD_A}$ is not of pseudoregulus type. Then, in both cases $\cD_A$ is not isotopic to a Generalized Twisted Field. So $\cD_A$
is new.\\

{\bf CASE $\cD_B$}.\quad By Theorem \ref{thm:linearsetCaseB} and by
\cite[Cor. 5.5]{LuMaPoTr-Sub} it follows that, for each $n\geq 3$, a presemifield
$\cD_B$ is isotopic to a rank 2 Generalized Twisted Field.\\

{\bf CASE $\cD_{AB }$}.\quad Comparing the nuclei of the presemifields $\cD_{AB}$
(see Remark \ref{rem:nucleiDemp}) we get that none of these presemifields is
isotopic to a $\cK_{17}$, $\cK_{19}$, $\cA$ and ${\mathcal {JMPT}}$ (pre)semifield.

If $N_q(b^2)\neq -1$ and $n>3$, it remains to compare $\cD_{AB}$ with a $\cGD$ semifield. By Theorem \ref{thm:linearsetCaseAB}, the associated linear set $L_{\cD_{AB}}$ is a maximum
scattered linear set with a long line not of pseudoregulus type; hence, by Remark \ref{rem:examplenonpseudoregulustype}, $L_{\cD_{AB}}$ is not of pseudoregulus type. So, by comparing it with the linear set of a $\cGD$ semifield (see Section 3 and Theorem \ref{thm:ScatGenDi}) we get that corresponding (pre)semifields cannot be isotopic.

If $N_q(b^2)=-1$ and $n>3$, it remains to compare $\cD_{AB}$ with an ${\mathcal {EMPT}1}$ semifield and with a non--scattered $\cGD$ semifield. From Section 3, the $\F_q$--linear set of an ${\mathcal {EMPT}1}$ semifield has at least $q+1$ points of weight greater than 2, and hence from $iii)$ of Theorem \ref{thm:linearsetCaseAB}, $\cD_{AB}$ cannot be isotopic to an ${\mathcal {EMPT}1}$ semifield.

Finally, if $\cD_{AB}$ were isotopic to a non--scattered $\cGD$ semifield, comparing the structure of the associated $\F_q$--linear set (see Theorem \ref{thm:linearsetCaseAB} and Section 3), we would have that the two integers $s$ and $t$ appearing in the multiplication of a $\cGD$ semifield should satisfy the arithmetic condition $\{\gcd(s,n),\gcd(t,n)\}=\{1,2\}$. This would imply $n$ even, which is not the case for a $\cD_{AB}$ semifield.

\bigskip
\noindent As we have observed at the end of Section $3$, all rank two (pre)semifields listed there have associated $\F_q$-linear set whose geometric structure is invariant under the transpose with the exception of $\cK_{17}$ and $\cK_{19}$ which are pairwise transpose. Hence, by the above
arguments, none of the semifields $\cD_A$ and $\cD_{AB}$ is isotopic to the transpose of a (pre)semifield in the above mentioned list.

Finally, as the families $\cD_A$, $\cD_B$ and $\cD_{AB}$ are closed under the translation dual operation (see Proposition \ref{prop:transl-dual}),
none of them is isotopic to the translation dual of a (pre)semifield in the above mentioned list.\qed

\subsection{Final Remark}
The isotopy issue for Dempwolff's presemifields  $\cD_A$ and
$\cD_{AB}$  with $n=3$ seems harder. This mainly because in this
case there are many more known examples in the literature to compare
with (see e.g. \cite{JoMaPoTr2008}, \cite{EbMaPoTr2009FFA},
\cite{JoMaPoTr2011}, \cite{LMPT}, \cite{MaPoTr2011}). For this
reason its study deserves a separate discussion based on a different
approach. This will be the main topic of a forthcoming paper by the
same authors.

\bigskip

\noindent {\bf Acknowledgement} The research of the first author is
supported by the Research Foundation Flanders-Belgium
(FWO-Vlaanderen). This work has been supported by a research project
from Universit\`a di Padova (Progetto di Ateneo CPDA113797/11), by
the Research Project of MIUR (Italian Office for University and
Research) ``Geometrie su Campi di Galois, piani di traslazione e
geometrie di incidenza'' and by the Research group GNSAGA of INDAM.

\bigskip

\bigskip
\bigskip
\bigskip

\noindent  Michel Lavrauw\\
Department of Management and Engineering,\\
Universit\`a di Padova,\\
I--\,36100 Vicenza, Italy\\
{\em michel.lavrauw@unipd.it}

\bigskip

\noindent Giuseppe Marino and Olga Polverino\\
Dipartimento di Matematica e Fisica,\\
 Seconda Universit\`a degli Studi
di Napoli,\\
I--\,81100 Caserta, Italy\\
{\em giuseppe.marino@unina2.it}, {\em olga.polverino@unina2.it}

\bigskip
\noindent  Rocco Trombetti\\
Dipartimento di Matematica e Applicazioni "R. Caccioppoli",\\
Universit\`a degli Studi di Napoli ``Federico II'',\\
 I--\,80126 Napoli, Italy\\
{\em rtrombet@unina.it}

\end{document}